\documentclass[12pt,reqno]{amsart}

\usepackage{preamble_paper}
\usepackage{preamble_general}
\usepackage{preamble_math}
\usepackage{mathtools}

\renewcommand{\P}{\mathbf{P}}

\begin{document}

\title{The inverse problem for the Steiner--Wiener index of trees}

\author{Adrian Beker}
\address{University of Zagreb, Faculty of Science, Department of Mathematics, Zagreb, Croatia}
\email{Adrian.Beker@math.hr}

\author{Rudi Mrazovi\'{c}}
\address{University of Zagreb, Faculty of Science, Department of Mathematics, Zagreb, Croatia}
\email{Rudi.Mrazovic@math.hr}

\begin{abstract}
    For a connected graph $G$ and a set $S\subset V(G)$,
    the Steiner distance $d_G(S)$ is the minimum number of edges in a connected subgraph of $G$ containing $S$.
    The Steiner--Wiener $k$ index is defined by
    \[
        \SW_k(G)
        =
        \sum_{\substack{S\subset V(G)\\ |S|=k}} d_G(S).
    \]
    We study the inverse problem for this invariant restricted to trees: 
    for fixed $k$,
    which positive integers occur as $\SW_k(T)$ for a finite tree $T$?
    We prove that all sufficiently large positive integers occur as $\SW_k(T)$ for some finite tree $T$ if and only if $k$ is even.
    For odd $k$, 
    we further show that the set of attainable values has asymptotic density of order $k^{-\delta}(\log k)^{-3/2}$, where $\delta$ is the Erd\H{o}s--Tenenbaum--Ford constant.
\end{abstract}

\maketitle

\section{Introduction}

The Wiener index is a classical distance-based graph invariant in chemical graph theory.
For a connected graph $G$, it is defined by
\[
    \W(G)
    =
    \sum_{\{u,v\}\subset V(G)}
        d_G(u,v),
\]
where $d_G(u,v)$ is the usual graph distance. 
This invariant was introduced by Wiener in his study of boiling points of paraffins \cite{Wiener},
and has since become a central example of a topological index. 
The case when $G$ is a tree has played an especially important role, 
both because many molecular graphs of interest are naturally modeled by trees and because trees admit particularly effective combinatorial formulas;
see, for example, the survey of Dobrynin, Entringer, and Gutman \cite{DobryninEntringerGutman}.

The Steiner--Wiener $k$ index is a multi-vertex analogue of the Wiener index.
If $G$ is connected and $S\subset V(G)$,
the \emph{Steiner distance} $d_G(S)$ is the minimum number of edges in a connected subgraph of $G$ containing every vertex of $S$. 
For $k\geq 2$, the \emph{Steiner--Wiener $k$ index} is defined by
\[
    \SW_k(G)
    =
    \sum_{\substack{S\subset V(G)\\ |S|=k}}
        d_G(S).
\]
When $k=2$, this is exactly the ordinary Wiener index. 
This invariant was introduced by Li, Mao and Gutman \cite{LiMaoGutman}, 
building on earlier work on average Steiner distance \cite{DankelmannOellermannSwart}.

In this paper, we study the corresponding inverse problem, first proposed in \cite{LiMaoGutmanInverse} (see also \cite{ZhangGentryWangJinZhang}). For fixed $k$, let
\begin{equation}
    \label{eq:all-sw-for-k}
    \cT_k
    \vcentcolon=
    \{\SW_k(T):T\text{ is a finite tree}\}.
\end{equation}
Which positive integers belong to $\cT_k$? 
When $k=2$, this is the classical inverse problem for Wiener indices of trees.
A result of Wagner \cite{Wagner} and, 
independently, Wang and Yu \cite{WangYu}, 
shows that all but finitely many positive integers are Wiener indices of trees.

For general connected graphs, 
Bernert and Shaw \cite{BernertShaw} recently obtained the analogous cofiniteness result:
for every fixed $k\geq 2$, 
every sufficiently large positive integer occurs as $\SW_k(G)$ for some connected graph $G$.
They also observed that,
from the viewpoint of chemical applications,
it is natural to impose structural restrictions such as being a tree,
and that the inverse problem becomes substantially more difficult in this setting. Our first main result addresses the case of trees by identifying all $k$ for which the cofiniteness phenomenon persists.

\begin{theorem}
    \label{t:cofinite}
    Let $k\geq 2$. Then
    \[
        \cT_k
        =
        \{\SW_k(T):T\text{ is a finite tree}\}
    \]
    is cofinite if and only if $k$ is even.
\end{theorem}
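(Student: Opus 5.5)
The plan rests on the edge decomposition of $\SW_k$ for trees. For a tree $T$ on $n$ vertices, a $k$-set $S$ has Steiner tree equal to the union of the edges $e$ separating $S$. Hence
\[
    \SW_k(T)=\sum_{e\in E(T)} f_{n,k}(a_e),\qquad f_{n,k}(a)=\binom{n}{k}-\binom{a}{k}-\binom{n-a}{k},
\]
where $a_e,n-a_e$ are the orders of the two components of $T-e$. The two directions of Theorem~\ref{t:cofinite} are then handled separately.

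\emph{Odd $k$ (not cofinite).} View $f_{n,k}(a)$ as a polynomial in $a$ and $n$. At $n=k-1$ we have $\binom{k-1}{k}=0$, and the reflection identity $\binom{k-1-x}{k}=(-1)^k\binom{x}{k}$ gives $f_{k-1,k}(a)=-(1+(-1)^k)\binom{a}{k}$. This vanishes identically when $k$ is odd. So for odd $k$, $(n-k+1)$ divides $f_{n,k}(a)$ in $\mathbb{Q}[n,a]$; for $k=3$ one checks $f_{n,3}(a)=\tfrac{n-2}{2}a(n-a)$, so $\SW_3(T)=\tfrac{n-2}{2}\W(T)$. The quotient has denominator dividing $k!$, which gives $k!\,\SW_k(T)\equiv 0\pmod{n-k+1}$.

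Next, compare with the star and the path. They give $\SW_k(T)\gg_k n^k$ and $\SW_k(T)\ll_k n^{k+1}$ for every tree on $n$ vertices. Now take $N=p$ a large prime. If $p=\SW_k(T)$, then $n-k+1$ divides $k!\,p$. Since $n\to\infty$ as $p\to\infty$, this forces $n-k+1\ge p/C_k$ for some constant $C_k$, so $n\gg p$. But then $\SW_k(T)\gg n^k\gg p^k>p$ for $k\ge 3$, a contradiction. Hence infinitely many integers are missed and $\cT_k$ is not cofinite. The finitely many small $n$ contribute only finitely many values, so they do not affect this argument.

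\emph{Even $k$ (cofinite).} Here I would imitate Wagner's construction for $k=2$. Take a long path $v_1\dots v_m$ (or a broom) as a backbone and attach pendant structures to vertices along it. The contribution of an edge depends only on $a_e$, so moving one leaf from $v_i$ to $v_{i+1}$ changes only the term of the edge $v_iv_{i+1}$, namely from $f_{n,k}(a)$ to $f_{n,k}(a\pm1)$. This yields increments $\Delta_n(a)=f_{n,k}(a+1)-f_{n,k}(a)$ that are local and controllable.

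The steps are as follows.
\begin{enumerate}
\item Fix $n$ and compute a base value $B_n$, e.g.\ a star-like tree with $r$ leaves that can be moved independently.
\item Show that the sums of increments obtained by independently sliding leaves along the backbone fill an entire interval $[B_n, B_n+L_n]$ of integers.
\item Show that consecutive $n$ give overlapping intervals, i.e.\ $L_n\ge B_{n+1}-B_n$, so their union is cofinite.
\end{enumerate}
For step 2, I would use small increments near the ends of the backbone (where $\Delta_n(a)\approx\binom{n-2}{k-2}$-scale differences in $a$ vanish to low order). These are combined with a greedy/subset-sum argument: a collection of increments whose sizes grow slowly, and each is at most one plus the sum of the previous ones, represents every integer up to their total. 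The key arithmetic input is that for even $k$ the available increments have gcd $1$, or can be made to. This is exactly where parity enters: for odd $k$ every increment carries the factor $n-k+1$ up to $k!$, while for even $k$ the reflection computation above shows no such forced common factor.

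I expect the main obstacle to be step 2. One must exhibit, for each large $n$, an explicit family of trees whose $\SW_k$ values realize every residue and fill a full interval of length comparable to $B_{n+1}-B_n\asymp n^{k}$ (or larger, if interpolating between star-like and path-like trees). This has to be done while controlling the gcd of the relevant differences $f_{n,k}(a+1)-f_{n,k}(a)$ for even $k$. My first attempt would be to use two backbones and trees with two adjustable attachment points, producing increments of coprime sizes such as $\Delta_n(1)$ and $\Delta_n(2)$. Finer interval-filling would then come from many leaves at various depths.
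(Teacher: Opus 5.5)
Your odd-$k$ argument is correct and is essentially the paper's. The paper also shows that every edge term is divisible by $(n-k+1)/\gcd(n-k+1,k!)$; it does this via $(n-x)^{\underline{k}}\equiv -x^{\underline{k}}\pmod{n-k+1}$ rather than your vanishing at $n=k-1$. It then uses $\SW_k(T)\ge(k-1)\binom{n}{k}$ to make this a nontrivial proper divisor, so large primes are missed.

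The even-$k$ direction, which is the substantive half, is not proved. Step~2 is the whole difficulty, and the mechanism you propose for it cannot work. A single leaf move changes one spine-edge size $a\to a\pm1$, and hence changes $\SW_k$ by $\pm\Delta_n(a)=\pm\bigl(\binom{n-a-1}{k-1}-\binom{a}{k-1}\bigr)$. Every nonzero value of this has absolute value $\gg_k n^{k-2}$, and near the ends of the backbone it is of order $n^{k-1}$, so those increments are not small either. Consequences for $k\ge4$:
\begin{itemize}
\item No independent leaf move changes $\SW_k$ by a bounded amount.
\item The criterion ``each increment is at most one plus the sum of the previous ones'' cannot even start.
\item Coprimality is far from sufficient. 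Two coprime increments $\Delta_n(1),\Delta_n(2)\asymp n^{k-1}$ represent all integers only beyond roughly their product $\asymp n^{2k-2}$, even if usable arbitrarily often (in a caterpillar each spine-edge size occurs only once). That threshold exceeds the entire range $\asymp n^{k+1}$ of $\SW_k$ on $n$-vertex trees.
\end{itemize}

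The missing idea is to obtain bounded changes through cancellation among several edge terms. In the paper, one replaces the spine sizes $s+\eta_1h_1+\dots+\eta_rh_r$ with $|\eta|_1$ odd by those with $|\eta|_1$ even. This changes $\SW_k$ by $\pm\Delta_{h_1}\cdots\Delta_{h_r}f_{n,k}(s)$, and the construction proceeds in stages:
\begin{itemize}
\item For $r=k$ the change is the constant $2h_1\cdots h_k$. Disjoint copies with two coprime step products give all even numbers in a range of length $\gg n$.
\item A $(k-1)$-fold switch based at $s=n/2$ with odd steps $1,3,\dots,3^{k-2}$ has the constant odd value $(h_1+\dots+h_{k-1})h_1\cdots h_{k-1}$, supplying the odd numbers.
\item $(k-r)$-fold switches of size $\asymp n^r$, based away from $n/2$, enlarge the interval from length $\asymp n^r$ to $\asymp n^{r+1}$.
\item Adding spine edges of sizes $a\in[\alpha n,\beta n]$, each worth $\asymp n^k$, brings the interval to length $\varepsilon n^{k+1}$.
\item Choosing everything coherently in $n$ keeps the drift of the base values at $O(n^k)$.
\end{itemize}

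Your stated ``key arithmetic input'' is also false. For odd $n$ all increments may share the factor $2$:
\begin{itemize}
\item For $k=2$, $a(n-a)$ is always even.
\item For $k=4$, $n=7$, the edge terms are $20,30,34$, so every $7$-vertex tree has even $\SW_4$.
\end{itemize}
Your reflection computation only excludes the factor $n-k+1$, not small primes. Consequently Step~3 cannot compare consecutive $n$. You must restrict to even $n$, where the unit-step $(k-1)$-fold difference at $n/2$ equals $-(k-1)$, which is odd. Then overlap the intervals for $n$ and $n+2$.
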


Theorem~\ref{t:cofinite} provides an essentially complete description of the possible values of the Steiner--Wiener $k$ index if $k$ is even, but leaves open the corresponding question when $k$ is odd. In this case, our second result shows that $\cT_k$ is closely related to a set of integers with a localised divisor, thereby revealing an unexpected link with the anatomy of integers. In turn, this connection allows us to give a precise estimate for the size of $\cT_k$.

Given a set $E\subseteq\N$, we write $d(E)$ for its asymptotic density (when it exists).
Given $0<\alpha<\beta<1$, we write
\begin{equation}\label{eq:localised-density}
    h(\alpha,\beta)
    \vcentcolon=
    d\bigl(
        \{n\in\N : n \text{ has a divisor in }(n^\alpha,n^\beta]\}
    \bigr)
\end{equation}
for the localised-divisor density\footnote{It is a standard fact that this density admits the alternative expression
\[
    h(\alpha,\beta) = \lim_{x\to\infty}\frac{H(x,x^\alpha,x^\beta)}{x},
\]
where $H(x,y,z)$ is the number of $n \leq x$ with a divisor in $(y,z]$; for a proof in the case $\beta < \frac{1}{2}$, see Remark~\ref{r:density}.}, whose existence was proved by Tenenbaum \cite{MR574122}.
Finally, we let
\begin{equation}
    \label{eq:ETFconstant}
    \delta 
    \vcentcolon=
    1 - \frac{1+\log\log2}{\log2}
    \approx 
    0.086
\end{equation}
be the Erd\H{o}s--Tenenbaum--Ford constant.

\begin{theorem}
    \label{t:density}
    For every odd $k\geq 3$, we have
    \[
        d(\cT_k\triangle\cD_k) = 0, \quad \text{where} \quad \cD_k 
        \vcentcolon=
        \bigl\{n \in \N : \text{$n$ has a divisor in }(n^{1/(k+1)}, n^{1/k}]\bigr\}.
    \]
    In particular, the asymptotic density of $\cT_k$ exists and satisfies
    \[
        d(\cT_k)
        = 
        h\Bigl(\frac1{k+1},\frac1k\Bigr) \asymp k^{-\delta}(\log k)^{-3/2}.
    \]
\end{theorem}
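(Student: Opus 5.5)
The plan is to reduce everything to an exact edge formula for $\SW_k$ on trees and then read off an arithmetic constraint for odd $k$. If $T$ has $n$ vertices and removing an edge $e$ leaves components of sizes $a_e$ and $n-a_e$, then a $k$-set $S$ uses $e$ in its Steiner tree exactly when it meets both sides. Hence
\[
    \SW_k(T)=\sum_{e\in E(T)}\Bigl[\tbinom{n}{k}-\tbinom{a_e}{k}-\tbinom{n-a_e}{k}\Bigr].
\]
The star gives $\SW_k\sim n^k/(k-1)!$, and the path gives order $n^{k+1}$. So a tree on $n$ vertices has $\SW_k(T)=N$ with $n\in (c_kN^{1/(k+1)},C_kN^{1/k}]$ for constants depending only on $k$. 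This is the source of the interval $(N^{1/(k+1)},N^{1/k}]$ in $\cD_k$.

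The first step is an arithmetic lemma for odd $k$: the polynomial $g_n(a)=\binom nk-\binom ak-\binom{n-a}k$ is divisible by $n/\gcd(n,c_k)$ for some constant $c_k$ (for example a divisor of $k!$). The reason to expect this is that for odd $k$ the map $a\mapsto n-a$ together with $\binom{-x}{k}=-\binom{x+k-1}{k}$ makes $g_n(a)$ vanish modulo $n$ up to bounded denominators. For even $k$ this antisymmetry fails, which is consistent with Theorem~\ref{t:cofinite}. Summing over edges shows that every $N\in\cT_k$ has a divisor $m$ with $m\asymp_k n$, and therefore a divisor in $(c'_kN^{1/(k+1)},C'_kN^{1/k}]$. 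To conclude $d(\cT_k\setminus\cD_k)=0$, it then suffices to show that the set of $N$ having a divisor in $(N^{\alpha}/C,N^{\alpha}]$ or in $(N^{\beta},CN^{\beta}]$ has density zero. This follows from standard estimates for $H(x,y,Cy)=o(x)$ as $y\to\infty$ (Tenenbaum/Ford), applied after splitting $N$ into dyadic ranges.

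For the reverse inclusion, $d(\cD_k\setminus\cT_k)=0$, I would build explicit families of trees on $n$ vertices. The natural candidates are caterpillars: a path spine with pendant vertices and a few adjustable branches, so that we can move between the star-like and path-like extremes. The aim is to show that, for each $n$, these trees realise every multiple of $n/\gcd(n,c_k)$ (in the appropriate residue class) in an interval $[(1+\varepsilon)n^k/(k-1)!,\,(1-\varepsilon)\kappa_k n^{k+1}]$. Concretely, sliding one leaf along the spine changes $\SW_k$ by a controlled polynomial amount. Combining a coarse parameter, the spine length, with fine adjustments from pendant positions should tile the interval with step size exactly the forced divisor. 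The values $N\in\cD_k$ whose divisor lies so close to the endpoints that no admissible $n$ exists again form a density-zero set, by the same localised-divisor estimate. Residue-class obstructions mod $c_k$ need a separate check; they should be handled by a suitable choice of $n$ among the bounded multiples or divisors of $m$.

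Finally, $d(\cT_k)=h(1/(k+1),1/k)$ follows from the symmetric-difference statement and the existence of $h$ (Tenenbaum). The asymptotic $h\asymp k^{-\delta}(\log k)^{-3/2}$ comes from Ford's theorem $H(x,y,z)\asymp x\,u^{\delta}(\log 2/u)^{-3/2}$ with $z=y^{1+u}$. Here $u\asymp 1/k$, integrated over dyadic scales using the remark on the density expression. I expect the main obstacle to be the construction step: proving that the caterpillar families hit \emph{every} admissible multiple in the full range, with only density-zero exceptions. This requires showing that the fine adjustments have gcd equal to the forced divisor and that the spacing is small enough to leave no gaps. I would first work this out for $k=3$ to find the right family before generalising.
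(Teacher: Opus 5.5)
Your outline of the easy half matches the paper, and your final Tenenbaum/Ford step is fine. The ingredients are the size bounds $n^k\ll\SW_k(T)\ll n^{k+1}$, a forced divisor of size $\asymp_k n$, and Ford's bound $H(x,y,Cy)=o(x)$ to discard divisors near the endpoints; together they give $d(\cT_k\setminus\cD_k)=0$.

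Your arithmetic lemma, however, is false as stated. For odd $k$, reducing modulo $n$ gives $k!\binom{n-a}{k}\equiv-k!\binom{a+k-1}{k}$, which does not cancel $k!\binom ak$. For $k=3$, $n=10$ the star has $\SW_3(S_{10})=324$, which is not divisible by $5$. No fixed $c_k$ helps: the $a=1$ summand is $\binom{n-1}{k-1}$, and $\gcd\bigl(n,\binom{n-1}{k-1}\bigr)\mid(k-1)!$. The correct modulus is $n-k+1$. Modulo $n-k+1$ one has $n^{\underline{k}}\equiv0$ and $(n-x)^{\underline{k}}\equiv(k-1-x)^{\underline{k}}=(-1)^kx^{\underline{k}}$. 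So for odd $k$ every summand is divisible by $(n-k+1)/\gcd(n-k+1,k!)$. The exact gcd is $g_{n,k}=(n-k+1)/\gcd(n-k+1,\lcm(1,\ldots,k-1))$, which is $4$ in the example. This is still $\asymp_k n$, so the easy half survives. For the converse you may take $n=d+k-1$, which gives $g_{n,k}\mid d$. But the lattice your construction was meant to tile is the wrong one.

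The genuine gap is the reverse inclusion $d(\cD_k\setminus\cT_k)=0$. You leave it as a hope, and it is the heart of the theorem. An $N\in\cD_k$ can only be realised at those $n$ with $g_{n,k}\mid N$, so intervals of attainable values for different $n$ cannot be patched together as in the even case. You need, for every large $n$, that \emph{every} multiple of $g_{n,k}$ in $[G_{n,k}+w_n,H_{n,k}-w_n]$ is attained. Here $w_n$ must be small enough that Ford's estimate makes the boundary layers density zero; the paper uses $C_kn^k\log n$. Since $\SW_k(C_A^n)=G_{n,k}+\sum_{a\in A}F_{n,k}(a)$, this means the subset sums of $F_{n,k}(1),\ldots,F_{n,k}(n-1)$ must contain all $\asymp n^k$ points of $g_{n,k}\Z$ in the bulk, including the sparse regime just above $G_{n,k}$. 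Sliding a leaf changes $\SW_k$ by a first difference of $F_{n,k}$, of order up to $n^{k-1}$, far coarser than the step $g_{n,k}\asymp n$. Even the paper's even-$k$ switch machinery only yields a long interval of attainable values at an uncontrolled location.

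The paper's key input is a local central limit theorem for $S=\sum_jF_{n,k}(j)X_j$, with $X_j$ i.i.d.\ Bernoulli$(p)$ and $p\in[C_k\log n/n,\tfrac12]$ tuned so that $\E S$ is the target. It is proved by Fourier inversion on $g_{n,k}\Z$, with the Giuliano--Weber estimate near the origin and a separate minor-arc bound:
\begin{itemize}
\item if most $F_{n,k}(j)/g_{n,k}$ do not lie in a small rank-one Bohr set, the characteristic function decays exponentially;
\item otherwise, a Weyl-inequality lower bound for bounded sumsets of polynomial values, together with Nathanson--S\'ark\"ozy, puts a long progression in the Bohr set;
\item this forces $k!\theta$ to be close to an integer $m$, and the definition of $g_{n,k}$ plus the density of the Bohr-good indices forces $m=0$.
\end{itemize}
Without an argument of this strength you obtain only $\limsup_{M\to\infty}|\cT_k\cap[M]|/M\le h(\frac1{k+1},\frac1k)$. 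That gives neither the existence nor the value of $d(\cT_k)$.
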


\subsection*{Overview of the arguments.}
The arguments in the paper naturally split into two parts.
The first part, culminating in the proof of Theorem~\ref{t:cofinite}, is constructive and mainly combinatorial:
it is based on explicit constructions.
The second part, leading to Theorem~\ref{t:density}, is probabilistic and more analytic in flavour. In addition to the connection to additive number theory, already observed in \cite{BernertShaw}, this part showcases a link with multiplicative number theory.

We now give a brief overview of the arguments. For a tree $T$ on $n$ vertices,
deleting an edge $e$ separates $T$ into two components, of sizes $m_e$ and $n-m_e$ say. 
The edge $e$ belongs to the smallest subtree containing a $k$-set $S \subseteq V(T)$ exactly when $S$ meets both components. 
Hence, by double counting,
\begin{equation}
    \label{eq:double-counting}
    \SW_k(T)
    =
    \sum_{e\in E(T)}
    \Bigl(
        \binom{n}{k}-\binom{m_e}{k}-\binom{n-m_e}{k}
    \Bigr).
\end{equation}
For odd $k$, after multiplying the summands by $k!$, 
one obtains a polynomial whose coefficients are multiples of $n-k+1$,
which gives a divisibility obstruction.

The even-$k$ case is constructive. 
We work with caterpillars, which are trees possessing a special structure that makes the analysis of the Steiner--Wiener index tractable.\footnote{We note that caterpillars already feature in the proof of the $k = 2$ case \cite{WangYu}.}
For each $A\subseteq [n-1]$, 
we build an $n$-vertex caterpillar $C_A^n$ whose components after removing spine edges have sizes precisely the elements of $A$ 
(see Section \ref{s:prelim} for details).
Then
\[
    \SW_k(C_A^n)
    =
    G_{n,k}+\sum_{a\in A} F_{n,k}(a),
\]
for some explicit polynomials $G_{n,k}$ and $F_{n,k}(a)$ (see \eqref{eq:GnFn}).
Thus the inverse problem is transformed into a subset-sum problem for the
polynomial sequence $F_{n,k}(1),\dots,F_{n,k}(n-1)$.

The key device in the analysis of the above-mentioned subset sums is a switch, i.e.\ 
a local modification of a subset of $[n-1]$.
When the two sets constituting a switch are chosen as the sign classes of a mixed finite difference, the subset sum changes by
\[
    \bigl|
        \Delta_{h_1}\dots\Delta_{h_r}F_{n,k}(s)
    \bigr|,
\]
a quantity we call the switch value.
Note that this is particularly amenable to analysis given that the $F_{n,k}$ are polynomial functions.
For even $k$, $k$-fold switches have constant values,
while lower-order switches give controlled values on the scales $n,n^2,\dots,n^k$.
By choosing many disjoint switches and using a simple interval-overlap argument for subset sums,
we obtain intervals of attainable switch-values of length $\Omega(n^{k+1})$. 
These intervals persist coherently as $n$ varies, 
and their lengths dominate the $O(n^k)$ change in the corresponding base subset-sum values.
Therefore the intervals for consecutive even $n$ overlap,
which proves cofiniteness of $\cT_k$.

To prove Theorem~\ref{t:density}, 
we show that almost all numbers satisfying the obvious divisibility- and size-obstructions are realisable as Steiner--Wiener indices. Let $T$ be an $n$-vertex caterpillar. The argument behind Theorem~\ref{t:cofinite} yields an interval of values of $\SW_k(T)$ of close to maximum length. However, it does not provide sufficient control over the precise location of this interval. Hence, we employ a different approach, which is probabilistic in nature. The idea is to consider a random subset of $[n-1]$ obtained by sampling each element independently with some probability $p \in (0,1)$. We then prove a local central limit theorem for the sum of the values of $F_{n,k}$ on this subset. Dropping the probabilistic language, this allows us to count subset-sum representations of each value in a sufficiently long interval, and in particular to show that this number is non-zero. The local limit theorem applies to a more general class of polynomials than those considered here, and hence might be of independent interest.

As usual, the proof of the local central limit theorem proceeds by Fourier inversion on the appropriate lattice, call it $g_{n,k}\Z$.
On a small neighborhood of the origin,
a result of Giuliano and Weber \cite{GiulianoWeber} gives the expected Gaussian main term.
The main difficulty is to control the remaining frequencies. 
The product formula for the characteristic function gives exponential decay unless most of the normalised values $F_{n,k}(j)/g_{n,k}$ lie in a small rank-one Bohr set.
In the latter case,
an additive-combinatorial argument shows that a bounded iterated sumset of these values contains a long arithmetic progression.
This forces the frequency to be very close to an integer; 
the normalization by $g_{n,k}$ then rules out all nonzero lattice obstructions. 
This yields the required minor-arc estimate and completes the proof of the local central limit theorem.

It follows that $\cT_k$ is sandwiched between two unions, over $n$, 
of intervals of multiples of $g_{n,k}$, 
with the two unions differing only in boundary layers of suitably small width.
Ford's estimates \cite{Ford} for integers having a divisor in a prescribed interval then allow us to relate $\cT_k$ to the localised-divisor set
\[
    \bigl\{
        N\in\N :
        N\text{ has a divisor in }(N^{1/(k+1)},N^{1/k}]
    \bigr\}.
\]
By Tenenbaum's theorem on localised divisors, the density of this set exists and Ford's estimates allow us to determine its order of magnitude.

\subsection*{Organisation of the paper.}
In Section \ref{s:prelim} we collect the basic identities for the Steiner--Wiener index of trees, and introduce caterpillars together with the corresponding switch formalism. Sections~\ref{s:odd-k} and~\ref{s:even-k} are devoted to the proofs of the odd- and even-$k$ directions of Theorem~\ref{t:cofinite} respectively, and constitute the combinatorial part of the paper. Sections~\ref{s:lclt} and~\ref{s:density} form the probabilistic/analytic part.
Section \ref{s:lclt} establishes the local central limit theorem and its consequence for subset sums of polynomial values.
Section \ref{s:density} combines that result with Ford's divisor estimates to prove Theorem~\ref{t:density}; 
it also gives an alternative proof of the even-$k$ direction of Theorem \ref{t:cofinite}.
Section~\ref{s:further} provides some concluding remarks, as well as a discussion of open problems and potential directions for future research.
Appendix \ref{s:gcd} contains a proof of an exact formula for $g_{n,k}$.

\subsection*{Notation.}
Throughout, $\N$ denotes the set of positive integers and we write $[m]=\{1,\ldots,m\}$.
Intervals are interpreted according to the ambient set: 
as intervals of real numbers in analytic statements and as discrete intervals when the ambient set is $\Z$ or $\N$.
We write $\lVert x\rVert_{\R/\Z}$ for the distance from $x$ to the nearest integer.
The notations $A=O(B)$ and $A\ll B$ both mean that $|A|\leq CB$ for some constant $C>0$,
while $A=\Omega(B)$ means $A\geq cB$ for some constant $c>0$ when $A,B$ are non-negative. 
We write $A\asymp B$ if $A\ll B$ and $B\ll A$.
Dependence of implicit constants on parameters is indicated by subscripts. Logarithms are to base $e$ unless otherwise indicated.


\section{Preliminaries}
\label{s:prelim}

Let $T$ be a tree on $n\geq k$ vertices. For an edge $e\in E(T)$, let
$m_e$ and $n-m_e$ be the sizes of the two components of $T-e$. For a $k$-subset $S\subseteq V(T)$, the edge $e$ is present in the unique smallest subtree containing $S$ 
if and only if $S$ meets both components of $T-e$. Therefore, by double
counting such pairs $(e,S)$, we have
\[
    \SW_k(T)
    =
    \sum_{e\in E(T)}
    \Bigl(
        \binom{n}{k}
        -
        \binom{m_e}{k}
        -
        \binom{n-m_e}{k}
    \Bigr).
\]
It will be more convenient to work with a slightly different expression, namely
\begin{equation}
    \label{eq:sw-trees}
    \SW_k(T) = G_{n,k} + \sum_{e\in E(T)}F_{n,k}(m_e),
\end{equation}
where
\begin{equation}
    \label{eq:GnFn}
    G_{n,k}
    \vcentcolon=
    (n-1) \binom{n-1}{k-1},
    \qquad
    F_{n,k}(x)
    \vcentcolon=
    \binom{n-1}{k} - \binom{x}{k} - \binom{n-x}{k}.
\end{equation}

To show that certain values are realisable as the Steiner--Wiener index, we will work with caterpillars, that is, trees whose non-leaf vertices lie on a
single path, called the \emph{spine}. Caterpillars with $n$ vertices are in natural $1$-$1$ correspondence with subsets of $[n-1]$; the following definition makes this precise.

\begin{definition}
    Let $n\geq 2$, and let
    \[
        A=\{a_1<\dots<a_m\}\subset [n-1].
    \]
    be an arbitrary subset. Additionally, let $a_0= 0$ and $a_{m+1} = n$. 
    We define $C_A^n$ to be the caterpillar with spine vertices
    $v_0,v_1,\dots,v_m$ and $a_{i+1}-a_i-1$ leaves attached to $v_i$ for each $i \in \{0,1,\ldots,m\}$.
\end{definition}

Note that for the caterpillar as in the above definition,
removing the spine edge $v_{i-1}v_i$ splits the tree into components of sizes $a_i$ and $n-a_i$.
Removing any non-spine edge yields components of sizes $1$ and $n-1$. 
Therefore, specialising \eqref{eq:sw-trees} to caterpillars gives
\begin{equation}
    \label{eq:sw-caterpillar}
    \SW_k(C_A^n)
    =
    G_{n,k} + \sum_{a\in A}F_{n,k}(a).
\end{equation}

The proof of Theorem~\ref{t:cofinite} for even $k$ is based on controlled modifications of the set $A$.

\begin{definition}
    Fix $n$. A \emph{switch} is an ordered pair $\cP=(B^-,B^+)$ of disjoint subsets of $[n-1]$.
    We call $B^-$ the \emph{negative set},
    $B^+$ the \emph{positive set}, 
    and $B^-\cup B^+$ the \emph{support} of $\cP$.

    The switch $\cP$ may be applied to a set $A\subset[n-1]$ if $B^- \subset A$ and $B^+\cap A = \emptyset$.
    Applying $\cP$ replaces $A$ by $(A \setminus B^-) \cup B^+$.

    The \emph{value} of the switch $\cP$ is defined to be
    \[
        v_{n,k}(\cP)
        \vcentcolon=
        \sum_{a\in B^+} F_{n,k}(a) - \sum_{a\in B^-} F_{n,k}(a).
    \]
    This is the amount by which $\SW_k(C_A^n)$ changes when $\cP$ is applied to $A$.
\end{definition}

We shall only use switches with positive value.
If a switch has negative value, we interchange its positive and negative sets.
To preserve coherence as $n$ varies (see the beginning of Section~\ref{s:even-k}),
it will be important that, within each family of switches used below,
the need for this interchange is independent of $n$ for all sufficiently large $n$.

A collection of switches $\fP$ is called \emph{compatible} if their supports are pairwise disjoint.
For a compatible collection $\fP$, we define
\[
    A(\fP)
    \vcentcolon=
    \bigcup_{\cP\in\fP} B^-(\cP)
    \qquad \text{and} \qquad
    V_{n,k}(\fP)
    \vcentcolon=
    \Bigl\{
        \sum_{\cP\in\fQ} v_{n,k}(\cP)
        :
        \fQ \subset \fP
    \Bigr\}.
\]
Then every value in
\[
    \SW_k(C_{A(\fP)}^n)+V_{n,k}(\fP)
\]
is realised as $\SW_k(C_A^n)$ for some $A\subset[n-1]$, 
since every subcollection of switches $\fQ \subset \fP$ may be applied to $A(\fP)$. We will repeatedly exploit this through the following simple observation. If $\fP'$ is a compatible collection obtained from $\fP$ by adding a single switch $\cP$, then
\[
    V_{n,k}(\fP') = V_{n,k}(\fP) \cup \bigl(V_{n,k}(\fP) + v_{n,k}(\cP)\bigr).
\]
In particular, if $V_{n,k}(\fP)$ contains an interval $I \subseteq \Z$ of length $|I| \geq v_{n,k}(\cP)$, then $V_{n,k}(\fP')$ contains $I' \vcentcolon= I \cup (I + v_{n,k}(\cP))$, an interval of length $|I| + v_{n,k}(\cP)$ and with the same left endpoint as $I$.

We next introduce finite-difference switches. 
Let $r\geq 1$ and $s, h_1, \dots, h_r \geq 1$, and suppose that the $2^r$ numbers
\begin{equation}
    \label{eq:fin-dif-switch-supp}
    s + \eta_1 h_1 + \dots + \eta_r h_r,
    \qquad 
    \eta = (\eta_1, \dots, \eta_r) \in \{0,1\}^r,
\end{equation}
are distinct elements of $[n-1]$.
The \emph{$r$-fold switch with base $s$ and steps $h_1,\dots,h_r$} is the switch whose positive and negative sets consist of the elements $s + \eta_1 h_1 + \dots + \eta_r h_r$ for which $|\eta|_1 \vcentcolon= \sum_{i=1}^{r}\eta_i$ is even and odd.
Note that, after possibly interchanging the positive and negative sets, the value of this switch is
\begin{equation}
    \label{eq:fin-dif-switch-val-1}
    \Bigl| 
        \sum_{\eta\in\{0,1\}^r}
            (-1)^{r-|\eta|_1} F_{n,k}(s + \eta_1h_1 + \dots + \eta_rh_r) 
    \Bigr|
    =
    \bigl|
        \Delta_{h_1} \dots \Delta_{h_r} F_{n,k}(s) 
    \bigr|,
\end{equation}
where we define the forward difference operator $\Delta_h f(x)\vcentcolon=f(x+h)-f(x)$.
For later use, we retain the orientation in which the even-parity elements form the positive set and write
\[
    \tilde{v}_{n,k}(s;\mathbf{h})
    \vcentcolon=
    \sum_{\eta\in\{0,1\}^r}
        (-1)^{|\eta|_1}
        F_{n,k}(s+\eta_1h_1+\dots+\eta_rh_r).
\]
Thus, $\tilde{v}_{n,k}(s;\mathbf{h}) = (-1)^r \Delta_{h_1}\dots\Delta_{h_r}F_{n,k}(s)$, and the positive and negative sets are interchanged precisely when $\tilde{v}_{n,k}(s;\mathbf{h})<0$.

We end this section with a simple lemma which collects some properties of the polynomials $F_{n,k}$ depending on the parity of $k$. Here and in what follows, we let
\begin{equation}
    \label{eq:gcd}
    g_{n,k} \vcentcolon= \gcd\bigl\{F_{n,k}(x) : x \in \Z\bigr\}.
\end{equation}

\begin{lemma}
    \label{l:even-vs-odd}
    We have the following properties of $F_{n,k}(x)$ as a polynomial in $x$:
    \begin{enumerate}
        \item Assume $k$ is even. Then $F_{n,k}$ has degree $k$, leading coefficient $-2/k!$ and satisfies
        \begin{equation}
            \label{eq:lin-comb-even}
            \Bigl|\sum_{\ell=0}^{k}(-1)^{k-\ell}\binom{k}{\ell}F_{n,k}(x+\ell)\Bigr| = 2,
        \end{equation}
        so in particular $g_{n,k} \in \{1,2\}$.
        \item Assume $k$ is odd. Then $F_{n,k}$ has degree $k-1$, leading coefficient $-\frac{n-k+1}{(k-1)!}$ and satisfies
        \begin{equation}
            \label{eq:lin-comb-odd}
            \Bigl|\sum_{\ell=0}^{k-1}(-1)^{k-1-\ell}\binom{k-1}{\ell}F_{n,k}(x+\ell)\Bigr| = n-k+1,
        \end{equation}
        so in particular $g_{n,k}$ divides $n-k+1$. Furthermore, all coefficients of $k!F_{n,k}$ are divisible by $n-k+1$, so in particular $g_{n,k}$ is divisible by $\frac{n-k+1}{(n-k+1,k!)}$.
    \end{enumerate}
\end{lemma}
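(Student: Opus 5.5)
The plan is to expand $F_{n,k}(x)=\binom{n-1}{k}-\binom{x}{k}-\binom{n-x}{k}$ in powers of $x$ and read off the top coefficients. Write $k!\binom{x}{k}=x^k-\binom{k}{2}x^{k-1}+\dots$ and $k!\binom{n-x}{k}=(n-x)(n-x-1)\cdots(n-x-k+1)$. The latter, as a polynomial in $x$, has leading term $(-1)^k x^k$. Its $x^{k-1}$ coefficient is $(-1)^{k-1}\sum_{j=0}^{k-1}(n-j)=(-1)^{k-1}\bigl(kn-\binom{k}{2}\bigr)$.

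For even $k$, the $x^k$ terms add up to $2x^k$, so $F_{n,k}$ has degree $k$ with leading coefficient $-2/k!$.

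For odd $k$, the $x^k$ terms cancel. The $x^{k-1}$ coefficient of $k!\bigl(\binom{x}{k}+\binom{n-x}{k}\bigr)$ is
\[
-\tbinom{k}{2}+kn-\tbinom{k}{2}=k(n-k+1).
\]
Hence the leading coefficient of $F_{n,k}$ is $-\frac{n-k+1}{(k-1)!}$.

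The identities \eqref{eq:lin-comb-even} and \eqref{eq:lin-comb-odd} then follow from the standard fact that for a polynomial $P$ of degree $d$ with leading coefficient $c$, the $d$-th forward difference $\Delta_1^d P(x)=\sum_{\ell=0}^{d}(-1)^{d-\ell}\binom{d}{\ell}P(x+\ell)$ is the constant $d!\,c$. This gives $k!\cdot(-2/k!)=-2$ in the even case and $(k-1)!\cdot\bigl(-\frac{n-k+1}{(k-1)!}\bigr)=-(n-k+1)$ in the odd case. Since $F_{n,k}$ is integer-valued on $\Z$, each left-hand side is an integer combination of values $F_{n,k}(x+\ell)$. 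So $g_{n,k}$ divides $2$, respectively $n-k+1$.

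The remaining claim, that every coefficient of $k!F_{n,k}$ is divisible by $n-k+1$ when $k$ is odd, is the main step. I would set $m=n-k+1$ and $Q(x)=k!F_{n,k}(x)$, viewed as a polynomial in $x$ with coefficients in $\Z[n]$, and show that $Q$ vanishes identically modulo $m$. Reducing modulo $m$ means substituting $n\equiv k-1$. Then
\[
k!\tbinom{n-1}{k}=(n-1)\cdots(n-k)\equiv (k-2)(k-3)\cdots(-1)=0,
\]
since the product contains the factor $0$. Also
\[
(n-x)(n-x-1)\cdots(n-x-k+1)\equiv (k-1-x)(k-2-x)\cdots(-x)=(-1)^k x(x-1)\cdots(x-k+1).
\]
For odd $k$ this equals $-k!\binom{x}{k}$, which exactly cancels the $-k!\binom{x}{k}$ term in $Q$. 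Therefore $Q\equiv 0$ as a polynomial with coefficients in $\Z[n]/(n-k+1)$.

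The one point needing care is the passage to integers. Fix $n$ as an integer; each coefficient of $Q$ is an integer polynomial in $n$ lying in the ideal $(n-k+1)\Z[n]$. Evaluating at the given $n$ therefore produces a multiple of the integer $n-k+1$.

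Finally, each value $k!F_{n,k}(x)$ is then a multiple of $m$. So $m\mid k!\,g_{n,k}$, and hence $\frac{m}{(m,k!)}$ divides $g_{n,k}$.
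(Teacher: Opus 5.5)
Your proposal is correct and follows essentially the same route as the paper: you compute the top coefficients explicitly (which the paper leaves as a ``simple calculation''), deduce the identities from the fact that $\Delta_1^d$ of a degree-$d$ polynomial equals $d!$ times its leading coefficient, and prove that the coefficients of $k!F_{n,k}$ are divisible by $n-k+1$ by reducing the falling factorials modulo $n-k+1$ via $n\equiv k-1$, exactly as the paper does. Your extra care in working in $\Z[n]$ before specialising $n$, and in spelling out the step from $m\mid k!\,g_{n,k}$ to $\frac{m}{(m,k!)}\mid g_{n,k}$, only makes explicit what the paper leaves implicit.
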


\begin{proof}
    The statements about the degree and leading coefficient of the polynomial $F_{n,k}$ follow by simple calculations. The identities~\eqref{eq:lin-comb-even} and~\eqref{eq:lin-comb-odd} then follow from~\eqref{eq:fin-dif-switch-val-1} with $r = \deg F_{n,k}$ and $h_1 = \ldots = h_r = 1$, and the fact that for a degree-$r$ polynomial $f$, $\Delta_1^rf$ equals $r!$ times its leading coefficient. Finally, to prove that the coefficients of $k!F_{n,k}$ are divisible by $n-k+1$, note that
    \[
        k!F_{n,k}(x) = (n-1)^{\underline{k}} - x^{\underline{k}} - (n-x)^{\underline{k}},
    \]
    where $z^{\underline{k}}$ denotes the falling factorial polynomial $z (z-1) \dots (z-k+1)$. Clearly $(n-1)^{\underline{k}}$ is divisible by $n-k+1$, and
    \[
        (n-x)^{\underline{k}} = (n-x)(n-x-1)\ldots(n-x-k+1)
    \]
    is congruent modulo $n-k+1$ to
    \[
        (k-1-x)(k-2-x)\ldots(-x) = (-1)^k(x-k+1)(x-k+2)\ldots x = -x^{\underline{k}}.
    \]
    This concludes the proof.
\end{proof}


\section{Proof of Theorem~\ref{t:cofinite} for odd \texorpdfstring{$k$}{k}}
\label{s:odd-k}

In this section we prove the odd-$k$ direction of Theorem~\ref{t:cofinite},
i.e.\ that for every odd $k\geq 3$, the set
\[
    \cT_k
    =
    \{\SW_k(T):T\text{ is a finite tree}\}
\]
is not cofinite.

Note that, for any tree $T$ on $n$ vertices, $\SW_k(T)$ is divisible by
\[
    h_{n,k} 
    \vcentcolon= 
    \frac{n-k+1}{\gcd(n-k+1,k!)},
\]
because by Lemma~\ref{l:even-vs-odd} the same is true for each summand in \eqref{eq:sw-trees}.
Obviously, $h_{n,k}>1$ for sufficiently large $n$. Moreover, $h_{n,k}\leq n-k+1\leq n$, whereas
\[
    \SW_k(T)
    \geq 
    (k-1)\binom{n}{k},
\]
since every $k$-set has Steiner distance at least $k-1$.
Thus, for sufficiently large $n$ we have $h_{n,k} < \SW_k(T)$ and hence $h_{n,k}$ is a proper divisor of $\SW_k(T)$. In particular, for a sufficiently large finite tree $T$, $\SW_k(T)$ cannot be prime, so there are infinitely many numbers not in $\cT_k$.

We note that the modulus $h_{n,k}$ suffices for this argument,
although we will later identify the \emph{right} divisor in Lemma~\ref{l:gcd}.


\section{Proof of Theorem~\ref{t:cofinite} for even \texorpdfstring{$k$}{k}}
\label{s:even-k}

Throughout this section, $k$ is fixed and even. In particular, we allow implicit constants to depend on $k$. In what follows, we say that the collections of switches $(\fQ_n)$, defined for sufficiently large even $n$, are \emph{coherent} if the following is satisfied for all such $n$: $\fQ_n$ is compatible and all switches in $\fQ_n$ have support of size $O(1)$; in addition, there is a pairing between $\fQ_n$ and $\fQ_{n+2}$ such that:
\begin{enumerate}
    \item all but $O(1)$ switches are paired;
    \item paired switches are ordered translates of one another by $O(1)$,
    i.e.\ if
    \[
        \cP=(B^-,B^+)\in\fQ_n
    \]
    is paired with $\cP'\in\fQ_{n+2}$, then
    \[
        \cP'=(B^-+t,B^++t)
    \]
    for some integer $t=O(1)$.
\end{enumerate}
We note that the bounded-size support condition is not really an issue since we will almost exclusively work with $r$-fold switches for $r \leq k$, which have support of size at most $2^k$.

We shall establish the following interval statement, from which Theorem~\ref{t:cofinite} follows easily. 

\begin{proposition}
    \label{p:interval-n-k-plus-1}
    There exist constants $\eps>0$ and $\ell\geq 1$ for which there are coherent collections of switches $(\fP_n)$ such that
    \[
        [\ell,\ell+\eps n^{k+1}]
        \subset
        V_{n,k}(\fP_n).
    \]
\end{proposition}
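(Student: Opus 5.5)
We build $\fP_n$ in $k+1$ layers of finite-difference switches, inductively enlarging an interval of attainable switch-value sums using the observation after the definition of compatible collections: if $V_{n,k}(\fP)$ contains an interval $I$ with $|I|\ge v_{n,k}(\cP)$, adding $\cP$ extends $I$ to length $|I|+v_{n,k}(\cP)$ with the same left endpoint.

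\emph{Base layer.} Start with about $c n^{\varepsilon'}$, or simply a bounded number, of $k$-fold switches with all steps $1$. By Lemma~\ref{l:even-vs-odd}(1), each has value exactly $2$, independent of $n$ and of the base. Also include $O(1)$ switches (for instance $(k-1)$-fold ones) whose values are coprime to $2$ up to a bounded quantity. Together these produce a fixed $\ell$ and an interval $[\ell,\ell+L_0]$ with $L_0$ a large constant. The base points are chosen in a region like $[1,n/10]$ at fixed positions, so the layer is trivially coherent.

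\emph{Layer $r$, for $r=k-1,k-2,\dots,0$.} Consider $r$-fold switches with steps $h_1=\dots=h_r=1$, or more flexibly with steps of size $\Theta(1)$, and base $s$. Then $\tilde v=(-1)^r\Delta_1^rF_{n,k}(s)$ is a polynomial in $(n,s)$ of total degree $k-r$. For $s=\theta n$ with $\theta$ in a fixed compact subinterval of $(0,1/2)$, its magnitude is $\asymp n^{k-r}$, provided the leading homogeneous part
\[
\partial_x^r\Bigl(\tbinom{n-1}{k}-\tbinom{x}{k}-\tbinom{n-x}{k}\Bigr)
\]
does not vanish at $x=\theta n$. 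Since this is a nonzero polynomial in $\theta$, we avoid its finitely many roots. Its sign is then fixed for large $n$, so the orientation is $n$-independent, as coherence requires. Within layer $r$, take bases $s$ ranging over disjoint blocks (spaced by $k+1$) in a window of length $\asymp n$ around $\theta_r n$, using disjoint windows for different $r$. This gives $\asymp n$ switches per layer, each of value $\asymp n^{k-r}$. The value changes slowly in $s$: consecutive values differ by $O(n^{k-r-1})$. Add them in increasing order of value. The interval from the previous layers has length $\gg n\cdot n^{k-r-1}=n^{k-r}$, once the previous layer's constant exceeds the largest new value, and each addition keeps it at least as long as the next value. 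After layer $r$, the interval length is $\asymp n\cdot n^{k-r}$.

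\emph{Choice of constants.} The main delicacy is the transition between layers: the length accumulated by layer $r+1$ (order $\varepsilon_{r+1} n^{k-r}$) must exceed the smallest value in layer $r$ (order $c_r n^{k-r}$). We arrange this by starting layer $r$ at bases where $|\Delta^rF|$ is small, for example near a simple root $\theta_0$ of the leading part. Values there grow linearly from $O(n^{k-r-1})$ upward, so the accumulated interval always dominates the next value and continuity is automatic. After layer $0$ (single elements $s$, with values $F_{n,k}(s)\asymp n^k$) we have an interval $[\ell,\ell+\varepsilon n^{k+1}]$.

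\emph{Coherence.} All bases are at positions $\lfloor\theta n\rfloor+j(k+1)$. Passing from $n$ to $n+2$ shifts each such position by $O(1)$, and the window counts change by $O(1)$, so all but $O(1)$ switches pair off as translates by $O(1)$. Disjoint windows keep the collections compatible. I expect the main obstacle to be the inter-layer continuity at the start of each layer, namely ensuring that small switch values exist near the roots with a fixed sign. I would handle this by taking $s$ on one side of a simple root, staying away from multiple roots.
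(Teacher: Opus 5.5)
\textbf{There is a genuine gap.} You correctly identify the inter-layer transition as the crux, but your resolution of it fails at the even layers $2\le r\le k-2$, and these layers exist for every even $k\ge4$. For $r\ge1$ the $\binom{n-1}{k}$ term is annihilated, so the leading part of $\Delta_{h_1}\cdots\Delta_{h_r}F_{n,k}(s)$ is
\[
    -\frac{h_1\cdots h_r}{(k-r)!}\bigl(s^{k-r}+(-1)^r(n-s)^{k-r}\bigr).
\]
For odd $r$ this has the simple root $s=n/2$. For $r=0$ the surviving $n^k$ term gives roots at $s=0$ and $s=n$. In both cases your ``start near a root'' device works.

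For even $r\ge2$, however, the exponent $k-r$ is even, so $s^{k-r}+(n-s)^{k-r}\ge 2(n/2)^{k-r}$ has no real root. Every $r$-fold switch then has value $\gg_{\mathbf h} n^{k-r}$, which is the same order as the interval accumulated through layer $r+1$. So you need a genuine inequality between constants, and your proposal does not supply one.

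That inequality is not automatic. Take $k=4$ and the natural bounded steps:
\begin{itemize}
\item The $3$-fold switches with steps $1,2,4$ have supports that are blocks of $8$ consecutive integers, and their values are exactly $8|2s-n+7|$.
\item If layer $3$ occupies $cn$ integers with $c<1$ (as it must, since it shares $[1,n-1]$ with the other layers), its switches have total value at most $(c-c^2/2)n^2+O(n)$.
\item Every $2$-fold switch with steps $1,2$ has value exactly $s^2+(n-s-3)^2\ge (n-3)^2/2$.
\end{itemize}
For large $n$ the first $2$-fold switch therefore cannot be added contiguously, since $c-c^2/2<1/2$ when $c<1$.

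The missing idea is the paper's amplification by large steps. A $q$-fold switch with steps $R,2R,\dots,2^{q-1}R$ has value $\asymp R^q n^{k-q}$ at bases away from $n/2$. You can still fit $\asymp n/R$ disjoint translates in a window of length $\asymp n$, so the gain is $\asymp R^{q-1}n^{k-q+1}$. This is an arbitrarily large multiple of $n^{k-q+1}$ when $q\ge2$. Accordingly, the paper runs its induction with an arbitrary prescribed constant $C$ at every scale $n^r$, $r\le k-1$. It uses a root-type argument only in the final step, taking singletons $a\in[\alpha n,\beta n]$ with $\alpha,\beta$ close to $1$ so that $F_{n,k}(a)\le\varepsilon_0 n^k/2$. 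That step is needed precisely because the preceding $1$-fold stage cannot be boosted. In your scheme, the layer feeding an even layer $r\ge2$ is $(r+1)$-fold with $r+1\ge3$, so boosting is available exactly where your root trick fails.

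\textbf{A separate, fixable error.} Steps $h_1=\dots=h_r=1$ do not define a switch for $r\ge2$, because the $2^r$ points $s+|\eta|_1$ are not distinct. So there are no value-$2$ switches, and your base layer needs three changes:
\begin{itemize}
\item use steps with distinct subset sums;
\item use two families with coprime step products, to generate all large even numbers;
\item add a $(k-1)$-fold switch near $n/2$ with odd step product and odd step sum, to supply an odd value (this requires $n$ even).
\end{itemize}
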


\begin{proof}[Proof of Theorem~\ref{t:cofinite} for even $k$]
    It is enough to prove cofiniteness using only caterpillars.
    We begin by applying Proposition~\ref{p:interval-n-k-plus-1}. For each sufficiently large even $n$, let $A_n=A(\fP_n)$ and $b_n=\SW_k(C_{A_n}^n)$.
    Since $[\ell, \ell+\eps n^{k+1}] \subset V_{n,k}(\fP_n)$,
    we have
    \[
        J_n
        \vcentcolon=
        [b_n+\ell,\, b_n+\ell+\eps n^{k+1}]
        \subset
        \{
            \SW_k(C_A^n)
            :
            A\subset[n-1]
        \}.
    \]

    We claim that $|b_{n+2}-b_n|=O(n^k)$.
    Indeed,
    \[
        |b_{n+2} - b_n|
        \leq
        |G_{n+2,k} - G_{n,k}|
        + 
        \Bigl|
            \sum_{a\in A_n} F_{n,k}(a)
            - \sum_{a\in A_{n+2}} F_{n+2,k}(a)
        \Bigr|.
    \]
    Obviously, $|G_{n+2,k}-G_{n,k}| = O(n^{k-1})$. Furthermore,
    the coherent choice of switch collections implies that all but $O(1)$ elements $a$ of $A_n$ are paired with all but $O(1)$ elements $a'$ of $A_{n+2}$, and the paired elements satisfy $a'=a+O(1)$.
    Since $F_{n,k}(a)$ is a polynomial of total degree $k$ in $n$ and $a$, 
    we have $F_{n+2,k}(a')-F_{n,k}(a)=O(n^{k-1})$.
    The paired elements therefore contribute $O(n)\cdot O(n^{k-1})=O(n^k)$. 
    The unpaired elements contribute only $O(1) \cdot O(n^k)=O(n^k)$ and the claim follows.

    We can now conclude that, for large enough even $n$,
    \[
        b_{n+2}+\ell < b_n+\ell+\eps n^{k+1},
    \]
    and thus the intervals $J_n$ and $J_{n+2}$ overlap.
    Hence, the union of the $J_n$'s (for $n$ even and large) is a single unbounded interval of integers.
    Since each $J_n$ consists of values $\SW_k(C_A^n)$,
    the theorem follows.
\end{proof}

We now prove Proposition~\ref{p:interval-n-k-plus-1}. The first step is to
construct a linear interval of switch values.

\begin{proposition}
    \label{p:linear-interval}
    For every $C>0$, there exists an integer $\ell\geq 1$ for which there are coherent collections of switches $(\fP_n^{(1)})$ such that $\fP_n^{(1)}$ is supported in
    \[
        [1,n/(3k)]\cup [n/2,3n/5],
    \]
    and
    \[
        [\ell,\ell+Cn]
        \subset
        V_{n,k}(\fP_n^{(1)}).
    \]
\end{proposition}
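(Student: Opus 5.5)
The idea is to build the interval in three stages, using only finite-difference switches of order $k$ and $k-1$ with all steps equal to $1$, and to compute their values from Lemma~\ref{l:even-vs-odd}. Since $k$ is even, $\Delta_1^k F_{n,k}\equiv -2$, so every $k$-fold unit-step switch has value exactly $2$, independently of $n$ and of the base. For $(k-1)$-fold switches, $\Delta_1^{k-1}F_{n,k}(s)$ is a linear polynomial in $s$. It is determined by the top two coefficients of $F_{n,k}$. The $x^k$-coefficient is $-2/k!$, and the $x^{k-1}$-coefficient comes from $-\binom{n-x}{k}$ and equals $n/(k-1)!+O(1)$. Hence
\[
    \Delta_1^{k-1}F_{n,k}(s) = n - 2s + c_k
\]
for a constant $c_k$ depending only on $k$; I would verify this by a routine expansion. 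Consequently a $(k-1)$-fold switch with base $s\le n/(3k)$ has value $n-2s+c_k\in[(1-\tfrac{2}{3k})n - O(1),\, n+O(1)]$. A base $s=\lceil n/2\rceil+t$ with $t\ge t_0=O(1)$ gives value $2t+O(1)$ after interchanging signs. This interchange is needed for all large $n$ or for none, as coherence requires.

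Stage 1 produces a short interval containing both parities. Place $M=O(1)$ disjoint $k$-fold unit-step switches at fixed small bases in $[1,n/(3k)]$; this gives the values $\{0,2,\dots,2M\}$. To fill in the odd numbers I need one switch of bounded odd value. Since $n$ is even, for a suitable $t$ the parity of $2t+c_k$ is fixed, so I would look for it among $(k-1)$-fold switches with base $n/2+t$ and steps $(1,\dots,1,2)$, or with shifted base. This parity check is the step I expect to be the main obstacle. It is equivalent to showing $g_{n,k}=1$ for even $n$ (otherwise no interval of length $\ge 1$ exists at all), and I would first attempt it by reducing $k!F_{n,k}$ modulo $2^{v_2(k!)+1}$. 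Once an odd bounded value $v_0\le 2M$ is available, the union observation gives an interval $I_0$ of length about $2M$.

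Stage 2 grows the interval to length $\gg n$ using switches near $n/2$. Take $(k-1)$-fold unit-step switches with bases $n/2+t_j$, where $t_j=t_0+2^{k}j$ so that supports are disjoint. Their values are $2t_j+O(1)$, which increase by $O(1)$ at each step, and all supports lie in $[n/2,3n/5]$ as long as $t_j\le n/10-2^k$. Starting from $I_0$, the length is always at least the next value (taking $M$ large compared with $2^{k+1}$ and the $O(1)$ errors). So by induction the union observation keeps producing a single interval with the same left endpoint. After the $\asymp n$ available switches its length is $\sum_j(2t_j+O(1))\asymp n^2\ge n$. Stage 3 then adds $L=\lceil 2C\rceil+1$ disjoint $(k-1)$-fold unit-step switches with fixed small bases $s=2^k i$ in $[1,n/(3k)]$, each of value at most $n+O(1)$, which is at most the current length. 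Each extends the interval by at least $n/2$, which yields $[\ell,\ell+Cn]$ with $\ell$ the fixed left endpoint of $I_0$ (plus the shift from $A(\fP)$, which does not affect $V_{n,k}$).

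For coherence, the Stage 1 and Stage 3 switches are identical for all $n$. A Stage 2 switch with index $j$ in $\fP_n$ is paired with the switch with the same index in $\fP_{n+2}$, which is its translate by $1$. Only the $O(1)$ largest indices near $t\approx n/10$ are unpaired. All supports have size at most $2^k$, and the sign orientation depends only on $t\ge t_0$, not on $n$.
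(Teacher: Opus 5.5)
There is a genuine gap, and it affects all three stages: the finite-difference switches you use, with every step equal to $1$, do not exist as switches once $r\ge 2$. The definition requires the $2^r$ points $s+\eta_1h_1+\dots+\eta_rh_r$ to be distinct. This is because a switch is a pair of disjoint \emph{sets}, and applying it changes the membership of each element of $A$ at most once; $C_A^n$ is indexed by a set, not a multiset. With $h_1=\dots=h_r=1$ the points collide: $\eta=(1,0,\dots,0)$ and $\eta=(0,1,0,\dots,0)$ both give $s+1$. Moreover, $\Delta_1^rF_{n,k}(s)=\sum_{\ell=0}^{r}(-1)^{r-\ell}\binom{r}{\ell}F_{n,k}(s+\ell)$ carries multiplicities $\binom{r}{\ell}$, which no switch can realise. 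So there is no switch of value $2$ as in Stage~1, and for $k\ge 4$ no switch of value $n-2s-(k-1)$ as in Stages~2 and~3. For legitimate steps (pairwise distinct, indeed with distinct subset sums) you get $\Delta_{h_1}\cdots\Delta_{h_k}F_{n,k}=-2h_1\cdots h_k$ and $\Delta_{h_1}\cdots\Delta_{h_{k-1}}F_{n,k}(s)=h_1\cdots h_{k-1}\,(n-2s-h_1-\dots-h_{k-1})$. Every value is therefore a multiple of the step product, which is at least $k!$ for $k$-fold and $(k-1)!$ for $(k-1)$-fold switches.

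The real content of the proposition is thus missing from your proposal: producing a seed block of \emph{consecutive} integers from bounded switch values. Computing $c_k$ does not settle the parity issue you flagged. For your unit steps one gets $c_k=1-k$, which is odd, but those switches are fictitious. Your fallback steps $(1,\dots,1,2)$ are also illegitimate, and with legitimate steps a step $2$ would make the value even. The paper builds the seed from many translates of two $k$-fold switches, with steps $1,2,\dots,2^{k-1}$ and $R,3R,\dots,3^{k-1}R$ ($R$ odd). Their half-values $2^{k(k-1)/2}$ and $R^k3^{k(k-1)/2}$ are coprime, so every even number in a long range is attained. It then adds one $(k-1)$-fold switch at base $n/2$ with the all-odd steps $1,3,\dots,3^{k-2}$, whose value $(1+3+\dots+3^{k-2})\,3^{(k-1)(k-2)/2}$ is odd exactly because $k-1$ is odd.

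Once such a seed is available, your Stage~2 idea does survive with legitimate steps. Use steps $1,3,\dots,3^{k-2}$ at bases $n/2+t_j$, with the $t_j$ spaced by more than $\Sigma=1+3+\dots+3^{k-2}$. The values are $H(2t_j+\Sigma)$ with $H=3^{(k-1)(k-2)/2}$, and they grow by bounded increments. The snowball then gives length $\asymp n^2$, which is a valid alternative to the paper's $\asymp n/R$ translates of the $R$-scaled switch with $R$ large.
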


\begin{proof}
    By Lemma~\ref{l:even-vs-odd}, $F_{n,k}(s)$ 
    has degree $k$ and leading coefficient $-2/k!$ as a polynomial in $s$.
    Hence for every $k$-fold switch with base $s$ and steps $h_1,\ldots,h_k$, we have
    \[
        \tilde{v}_{n,k}(s;\mathbf{h})
        =
        (-1)^k \Delta_{h_1}\dots\Delta_{h_k}F_{n,k}(s)
        =
        -2h_1\dots h_k < 0,
    \]
    and thus its value is equal to $2h_1\dots h_k$.

    Let $\cP_1$ and $\cP_2$ be $k$-fold switches with steps $1,2,2^2,\dots,2^{k-1}$ and $R,3R,3^2R,\dots,3^{k-1}R$,
    respectively, where $R\geq 3$ is an odd integer to be chosen later. 
    Denote their values with $v_{n,k}(\cP_1) = 2p_1$ and $v_{n,k}(\cP_2)=2p_2$, where
    \[
        p_1=2^{k(k-1)/2},
        \qquad
        p_2=R^k3^{k(k-1)/2}.
    \]

    Take $p_2$ disjoint translates of $\cP_1$, and
    \[
        m_2
        =
        \Bigl\lfloor
            \frac{n}{10k3^kR}
        \Bigr\rfloor
    \]
    further disjoint translates of $\cP_2$, all supported inside $[1,n/(3k)]$.

    The values obtained from these switches include
    \[
        2(i_1p_1+i_2p_2),
        \qquad
        0\leq i_1\leq p_2,
        \ 
        0\leq i_2\leq m_2.
    \]
    Since $R$ is odd, $p_1$ and $p_2$ are coprime, 
    and hence elementary number theory shows that the above values include every even integer in $[2(p_2-1)p_1,\, 2m_2p_2]$.

    It remains to obtain odd values as well.
    Let $\cQ$ be the $(k-1)$-fold switch with steps $1,3,3^2,\dots,3^{k-2}$
    and base $s=n/2$.
    Note that its support lies in $[n/2,3n/5]$ for all sufficiently large even $n$.
    A direct computation gives
    \begin{equation}
        \label{eq:fin-dif-switch-val-3}
        \Delta_{h_1}\dots\Delta_{h_{k-1}}F_{n,k}(n/2)
        =
        -(h_1+\dots+h_{k-1})\, h_1\dots h_{k-1},
    \end{equation}
    and hence $\tilde{v}_{n,k}(n/2;\mathbf{h}) > 0$.
    Furthermore, $v_{n,k}(\cQ)=q$,
    where $q$ is an odd constant depending only on $k$.

    Combining $\cQ$ with the previously considered switches, 
    we obtain all integers in an interval of length at least
    \[
        2m_2p_2-2(p_2-1)p_1-q
        \asymp_k 
        R^{k-1}n.
    \]
    Thus, we may choose the odd integer $R$ so large that this length is at least $Cn$ for all sufficiently large $n$.

    The switches may be chosen coherently in $n$.
    Indeed, when $n$ is replaced by $n+2$, the number of translates changes by $O(1)$, 
    and corresponding bases change by $O(1)$. 
    The preceding sign computations show that, within each family, the same choice---interchanging the positive and negative sets or leaving them unchanged---applies for all sufficiently large $n$.
    Hence corresponding switches are translates of one another as ordered pairs.
\end{proof}

The next proposition amplifies the linear interval to an interval of length
$\Omega(n^k)$.

\begin{proposition}
    \label{p:interval-n-k}
    There exist constants $\eps_0>0$ and $\ell_0\geq 1$ for which there are coherent
    collections of switches $(\fP_n^{(k)})$ such that $\fP_n^{(k)}$ is supported in
    \[
        [1,n/4]\cup [n/2,3n/5],
    \]
    and
    \[
        [\ell_0,\ell_0+\eps_0n^k]
        \subset
        V_{n,k}(\fP_n^{(k)}).
    \]
\end{proposition}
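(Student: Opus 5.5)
We build the interval in stages, starting from Proposition~\ref{p:linear-interval} and bootstrapping the length through the scales $n, n^2, \dots, n^k$. At each stage $j=1,\dots,k-1$ we add a family of $(k-j)$-fold switches whose values are of order $n^{j}$. We use the interval-overlap observation from Section~\ref{s:prelim}: if $V$ contains an interval of length $L$ and we add a switch of value $v\le L$, the interval grows to length $L+v$. Adding $\Theta(n)$ such switches, each of value $\asymp n^j$ and at most the current length, raises the length from $\Omega(n^j)$ to $\Omega(n^{j+1})$. After the last stage the length is $\Omega(n^k)$, and $\ell_0$ is just the $\ell$ from the first stage.

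\emph{Stage $j$.} Assume we already have an interval of length at least $C_j n^j$ in $V_{n,k}$ of a coherent collection supported in $[1,n/4]\cup[n/2,3n/5]$.

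1. Take $(k-j)$-fold switches with small fixed steps, say $1,2,\dots,2^{k-j-1}$, and bases $s$ ranging over $\Theta(n)$ disjoint translates inside a fresh subinterval of $[1,n/4]$, disjoint from everything used so far. We use a fixed portion of $[1,n/4]$ for each stage, for instance stage $j$ lives in $[\frac{j}{4k}n,\frac{j+1}{4k}n]$, and Proposition~\ref{p:linear-interval} already occupies $[1,n/(3k)]$, so we shift accordingly.

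2. The value of such a switch is $|\Delta_{h_1}\cdots\Delta_{h_{k-j}}F_{n,k}(s)|$. By Taylor expansion this equals $h_1\cdots h_{k-j}\,|F_{n,k}^{(k-j)}(s)|$ up to $O(n^{j-1})$. For $s=\theta n$ with $\theta\in(0,1/4]$ fixed, the derivative $F^{(k-j)}_{n,k}(s) = -\bigl(\tbinom{x}{k}^{(k-j)}+\tbinom{n-x}{k}^{(k-j)}\bigr)$ is, to leading order, $-\frac{1}{j!}\bigl(s^{j}+(n-s)^{j}\bigr)$ when $k-j$ is even. When $k-j$ is odd it is $-\frac{1}{j!}\bigl(s^{j}-(n-s)^{j}\bigr)$. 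In both cases it is nonzero, has constant sign and has size $\asymp n^j$ for $s\le n/4$, because $s<n-s$. This bounded-away-from-zero property is the reason for the restriction $s\le n/4$. So every such switch has value in $[c_1n^j,c_2n^j]$, and the sign, hence the orientation, is independent of $n$ for large $n$, which gives coherence.

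3. To make the overlap argument apply, each added value must be at most the current interval length. Since the switch values are at most $c_2 n^j$, we first need an initial length at least $c_2n^j$. So at the previous stage we add enough switches, a constant multiple of $n$ with the constant chosen large, so that the length reaches $C_{j+1}n^{j+1}$ with $C_{j+1}\ge c_2^{(j+1)}$, the upper constant of the next stage. For the base case, Proposition~\ref{p:linear-interval} allows any $C$, so we choose $C\ge c_2^{(1)}$. Then we add the stage-$j$ switches one at a time. Each addition keeps an interval with the same left endpoint and increases its length by at least $c_1n^j$. With $\Theta(n)$ switches, and space permitting $\asymp n/2^{k}$ disjoint translates, the length becomes $\gg n^{j+1}$.

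\emph{Coherence.} Passing from $n$ to $n+2$ changes the number of translates and the stage boundaries by $O(1)$, and the bases shift by $O(1)$, so the pairing conditions hold. Note that the at-most-$L$ condition must hold uniformly, but that is automatic because $c_2$ is uniform.

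The main obstacle is the bookkeeping in step 3: the constants have to be chained so that each stage's maximal switch value is dominated by the previous stage's length, while space is still reserved. The first step would be to fix the per-stage support intervals in advance and then choose $C$ in Proposition~\ref{p:linear-interval} backwards from the needed $c_2$ constants. The degenerate case $j$ near $k$, where the switches are $1$-fold and the values $|F_{n,k}(s+1)-F_{n,k}(s)|\asymp n^{k-1}$, also needs the sign check for $k-j$ odd, and it holds for $s<n/2$.
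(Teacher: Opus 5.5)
\textbf{Gap in step 3.} Your architecture matches the paper: you bootstrap from Proposition~\ref{p:linear-interval} through the scales $n,n^2,\dots,n^k$ using $(k-j)$-fold switches in $[1,n/4]$ and the interval-overlap observation. Your sign analysis and coherence discussion are also fine. The gap is in step 3, the chaining of constants you flag yourself. You want the length after stage $j$ to be at least $c_2^{(j+1)}n^{j+1}$, and you propose to get this by adding ``a constant multiple of $n$, with the constant chosen large'' stage-$j$ switches. That cannot be done. Their supports are pairwise disjoint sets of size $2^{k-j}$ in $[1,n/4]$, so there are at most $n/2^{k-j+2}$ of them. With fixed steps, each value is at most a fixed multiple of $n^j$. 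So the length after stage $j$ is at most $\gamma n^{j+1}$ with $\gamma$ fixed by $k$ and the steps, and nothing forces $\gamma\ge c_2^{(j+1)}$.

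It already fails for $k=4$. The stage-$1$ switches ($3$-fold, steps $1,2,4$) have supports of $8$ consecutive integers and values exactly $8(n-2s-7)$, since $F_{n,4}'''(x)=n-2x$. Even if they fill all of $[1,n/4]$, their total is about $\int_0^{n/4}(n-2x)\,dx=\frac{3}{16}n^2$. But every $2$-fold switch with base $s\le n/4$ (steps at least $1,2$) has value about $s^2+(n-s)^2\ge\frac{5}{8}n^2$. So the overlap argument cannot add even one stage-$2$ switch. Only $k=2$, which has no intermediate stage, survives.

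\textbf{Missing idea.} The large constant must come from the step sizes, not from the number of switches. The paper uses $q$-fold switches ($q=k-r$) with steps $R,2R,\dots,2^{q-1}R$.
\begin{itemize}
\item Each value is $\asymp R^q2^{q(q-1)/2}n^r$, while about $n/(2^qR)$ disjoint translates still fit.
\item So the total gain is $\asymp R^{q-1}n^{r+1}$, which is arbitrarily large as $R\to\infty$ whenever $q\ge 2$.
\item This gives the stronger inductive statement: for each $1\le r\le k-1$ and every $C>0$, an interval of length $Cn^r$ is realisable.
\end{itemize}
The constants are chosen backwards. First fix $R$ from the target constant at scale $n^{r+1}$. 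Then invoke the hypothesis at scale $n^r$ with a constant exceeding the maximal new switch-value constant $c_rP_R$. At the final step $q=1$ the gain no longer grows with $R$, but there only some $\varepsilon_0>0$ is needed.
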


\begin{proof}
    We prove the following stronger induction statement for scales smaller than $n^k$.
    For $1\leq r\leq k-1$ and every $C>0$, 
    there exists an integer $\ell=\ell(r,C)\geq 1$ for which
    there are coherent collections of switches $(\fP_n^{(r)})$ such that $\fP_n^{(r)}$ is supported in $[1,n/(k-r+4))\cup [n/2,3n/5]$,
    and
    \[
        [\ell,\ell+Cn^r]
        \subset
        V_{n,k}(\fP_n^{(r)}).
    \]
    The case $r=1$ is Proposition~\ref{p:linear-interval}.

    Suppose the statement holds for some $r$, where $1\leq r\leq k-1$.
    Put $q=k-r$.
    We will use $q$-fold switches to pass from scale $n^r$ to scale $n^{r+1}$.

    For fixed steps $h_1,\dots,h_q$, uniformly for $0\leq s\leq n$,
    \[
        \Delta_{h_1}\dots\Delta_{h_q}F_{n,k}(s)
        =
        -\frac{h_1\dots h_q}{r!}
        \bigl(
            s^r+(-1)^q (n-s)^r
        \bigr)
        +
        O_{\mathbf h}(n^{r-1}).
    \]
    Since $s\in [n/(q+4), n/(q+3))$ is bounded away from $n/2$,
    the expression $(n-s)^r+(-1)^qs^r$ is positive and of order $n^r$.
    Consequently, $\widetilde v_{n,k}(s;\mathbf h)<0$ uniformly over all these bases for all sufficiently large $n$.
    Furthermore, 
    there is a constant $c_r>1$, depending only on $k$, such that for all sufficiently large $n$,
    \[
        \frac{1}{c_r}(h_1\dots h_q)n^r
        \leq
        \bigl|
            \Delta_{h_1}\dots\Delta_{h_q}F_{n,k}(s)
        \bigr|
        \leq
        c_r(h_1\dots h_q)n^r
    \]
    for every such $s$.

    Let $\cR$ be a $q$-fold switch with steps $R, 2R, 2^2R, \dots, 2^{q-1}R$,
    where $R$ is a positive integer to be chosen.
    Its step product is $P_R \vcentcolon= R^q2^{q(q-1)/2}$.
    Choose
    \[
        m
        =
        \Bigl\lfloor 
            \frac{n}{4k^2 2^qR}
        \Bigr\rfloor
    \]
    disjoint translates of $\cR$, all supported inside $[n/(q+4), n/(q+3))$, which is possible for all sufficiently large $n$.

    Each of these switches has value between
    \[
        \frac{1}{c_r}P_Rn^r
        \qquad\text{and}\qquad
        c_rP_Rn^r.
    \]
    Choose $C_0>c_rP_R$. By the induction hypothesis applied with $C=C_0$,
    there is a compatible collection $\fP_n^{(r)}$ whose attainable
    values contain an interval of length $C_0n^r$. Since each new switch has
    value at most $C_0n^r$, adding the new switches one by one preserves a
    single interval of attainable values. Hence the combined system realizes
    an interval of length at least
    \[
        C_0n^r
        +
        m\cdot \frac{1}{c_r}P_Rn^r
        \geq
        \gamma_R n^{r+1}
    \]
    for some constant $\gamma_R\asymp_k R^{q-1}$.

    The support of the new switches lies in $[n/(q+4), n/(q+3))$,
    while the old support lies in $[1,n/(q+4))\cup[n/2,3n/5]$.
    Therefore the combined support lies in $[1,n/(q+3))\cup[n/2,3n/5]$,
    as required for the next stage. The construction is coherent in $n$:
    when $n$ is replaced by $n+2$,
    the number of translates changes by $O(1)$, paired bases change by $O(1)$, 
    and the uniform sign conclusion above ensures that, for each pair of corresponding switches, the same choice of whether to interchange the positive and negative sets is made.

    If $q\geq 2$, equivalently $r\leq k-2$, 
    then $\gamma_R$ can be made arbitrarily large by choosing $R$ large.
    Thus the induction statement holds for $r+1$ with an arbitrary prescribed leading constant.

    If $q=1$, equivalently $r=k-1$, 
    then $\gamma_R$ is bounded below by a positive constant depending only on $k$.
    This gives the conclusion of the proposition with some $\eps_0>0$.
\end{proof}

We now complete the proof of the main interval proposition.

\begin{proof}[Proof of Proposition~\ref{p:interval-n-k-plus-1}]
    Let $\eps_0>0$ and $\ell_0$ be as in Proposition~\ref{p:interval-n-k}.

    For $\lambda\in(0,1)$,
    \[
        F_{n,k}(\lambda n)
        =
        \frac{1-\lambda^k-(1-\lambda)^k}{k!}n^k
        +
        O(n^{k-1}).
    \]
    Hence, we may choose constants $\frac34<\alpha<\beta<1$ such that, 
    for all sufficiently large $n$ and all $a\in[\alpha n,\beta n]$,
    \[
        \frac{\eps_0}{4}n^k
        \leq
        F_{n,k}(a)
        \leq
        \frac{\eps_0}{2}n^k.
    \]

    Let $\fP_n^{(k)}$ be the switch collection given by
    Proposition~\ref{p:interval-n-k}, so that
    \[
        [\ell_0,\ell_0+\eps_0n^k]
        \subset
        V_{n,k}(\fP_n^{(k)}).
    \]
    For each $a\in[\alpha n,\beta n]$ add the singleton switch $(\emptyset,\{a\})$
    and note that the supports of these singleton switches are disjoint from the supports of switches in $\fP_n^{(k)}$.

    Each singleton switch has value at most $\eps_0 n^k / 2$,
    whereas the already constructed interval has length $\eps_0 n^k$. 
    Therefore adding the singleton switches one by one preserves a single interval of attainable values. 
    The total gain from the singleton switches is at least
    \[
        \bigl| [\alpha n,\beta n] \bigr|
        \cdot
        \frac{\eps_0}{4}n^k
        \gg
        n^{k+1}.
    \]
    Hence there exists $\eps>0$ such that
    \[
        [\ell_0,\ell_0+\eps n^{k+1}]
        \subset
        V_{n,k}(\fP_n)
    \]
    for coherent collections $(\fP_n)$.
\end{proof}

\section{A local central limit theorem}
\label{s:lclt}

Let $k\geq 2$ be an integer, which is considered fixed throughout this section. Let $n \in \N$ be sufficiently large in terms of $k$. In this section we establish a local limit theorem for the sum of a binomial random subset of $\{F_{n,k}(a) : a\in[n-1]\}$. Recall the definition
\[
    F_{n,k}(x) \vcentcolon= \binom{n-1}{k} - \binom{x}{k} - \binom{n-x}{k}
\]
and also
\[
    g_{n,k} \vcentcolon= \gcd\bigl\{F_{n,k}(x) : x \in \Z\bigr\}.
\]
Let $(X_j)_{j\in[n-1]}$ be independent identically distributed Bernoulli random variables with parameter $p \in (0,1)$. We form the random subset sum 
\[
    S \vcentcolon= \sum_{j=1}^{n-1}F_{n,k}(j)X_j,
\]
which naturally takes values in the lattice $g_{n,k}\Z$. Let $\mu \vcentcolon= \E[S]$ and $\sigma^2 \vcentcolon= \mathrm{Var}(S)$ be the expectation and variance of $S$ respectively. We have
\[
    \mu = p\sum_{j=1}^{n-1}F_{n,k}(j), \quad \sigma^2 = p(1-p)\sum_{j=1}^{n-1}F_{n,k}(j)^2.
\]
Note that $F_{n,k}(j)$ is maximised for $j\in \{\lfloor n/2\rfloor, \lceil n/2\rceil\}$, and $F_{n,k}(j) \asymp_k n^k$ for $n/3 \leq j \leq 2n/3$ say. In particular, it follows that
\[
    \mu \asymp_k pn^{k+1}, \quad \sigma^2 \asymp_k p(1-p)n^{2k+1}.
\]
We denote by
\[
    \varphi(x) \vcentcolon= \frac{1}{\sqrt{2\pi}}e^{-x^2/2}
\]
the probability density function of the standard normal distribution and by $\phi_Y(t) \vcentcolon= \E[e^{itY}]$ the characteristic function of a random variable $Y$.

The main result of this section is the following quantitative local central limit theorem.

\begin{theorem}
    \label{t:lclt}
    Suppose that $p \in [\frac{1}{n},\frac{1}{2}]$ and $m \in g_{n,k}\Z$. Then
    \begin{equation}
        \label{eq:lclt}
        \Bigl|\frac{\sigma}{g_{n,k}}\P(S=m) - \varphi\Bigl(\frac{m-\mu}{\sigma}\Bigr)\Bigr| \ll_k (pn)^{-1/6} + \sigma\exp\bigl(-\Omega_k(pn)\bigr).
    \end{equation}
\end{theorem}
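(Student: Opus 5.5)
We argue by Fourier inversion on the lattice $g_{n,k}\Z$, following the outline in the introduction. Write $g=g_{n,k}$, $a_j=F_{n,k}(j)/g\in\Z$ and $S=gS'$ with $S'=\sum_j a_jX_j$. For $m\in g\Z$,
\[
    \frac{\sigma}{g}\P(S=m)=\frac{\sigma}{2\pi g}\int_{-\pi}^{\pi}\phi_{S'}(\theta)e^{-i\theta m/g}\,d\theta ,
\]
and, by the usual Gaussian inversion formula,
\[
    \varphi\Bigl(\frac{m-\mu}{\sigma}\Bigr)=\frac{1}{2\pi}\int_{\R}e^{-u^2/2}e^{-iu(m-\mu)/\sigma}\,du .
\]
Substituting $\theta=gu/\sigma$, we split the frequencies into three ranges.

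\emph{Major arc}, $|u|\le u_0$. Here we compare $\phi_{(S-\mu)/\sigma}(u)$ with $e^{-u^2/2}$ using the quantitative characteristic-function estimate of Giuliano and Weber for independent sums. The summands $F_{n,k}(j)X_j$ are bounded by $O(n^k)$, their third absolute moments sum to $\asymp p\,n^{3k+1}$, and $\sigma^3\asymp (p n)^{3/2}n^{3k}$, so the Lyapunov ratio is $L\asymp (pn)^{-1/2}$. The standard Esseen-type bound $|\phi(u)-e^{-u^2/2}|\ll L|u|^3e^{-u^2/3}$ holds for $|u|\le cL^{-1}$. We take $u_0=(pn)^{1/6}$ (admissible since $u_0\le cL^{-1}$). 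The error integrated over $|u|\le u_0$ is $O(L)$, and the Gaussian tail beyond $u_0$ is tiny, so the major arc contributes within the claimed $O((pn)^{-1/6})$. A balancing of this kind is presumably the source of the exponent $1/6$: with a cruder estimate $|\phi-e^{-u^2/2}|\ll L|u|^3$ we would get $Lu_0^4$, and this choice again gives the $-1/6$-type loss.

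\emph{Intermediate range}, $u_0\le|u|$ with $\theta=gu/\sigma$ still small, say $|\theta|\le c/n^k$. The phases $\theta a_j$ are then small, and we use
\[
    |1-p+pe^{ix}|^2=1-2p(1-p)(1-\cos x)\le \exp\bigl(-c\,p\,x^2\bigr)
\]
for $|x|\le\pi$. This gives $|\phi_{S'}(\theta)|\le\exp(-c\,p\,\theta^2\sum a_j^2)=\exp(-cu^2)$, which after multiplying by $\sigma/g\cdot d\theta= du$ integrates to $O(e^{-cu_0^2})$, negligible.

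\emph{Minor arcs}, $c/n^k\le|\theta|\le\pi$. We need $|\phi_{S'}(\theta)|\le\exp(-\Omega_k(pn))$; after multiplying by the prefactor $\sigma/g$ and integrating over an interval of length $2\pi$, this yields exactly the second error term $\sigma\exp(-\Omega_k(pn))$. The same inequality gives
\[
    |\phi_{S'}(\theta)|\le\exp\Bigl(-c\,p\sum_j\lVert\theta a_j/2\pi\rVert_{\R/\Z}^2\Bigr),
\]
so it suffices to show that for all such $\theta$ at least $\eta n$ indices satisfy $\lVert\theta a_j/2\pi\rVert\ge\eta$, with $\eta=\eta_k>0$.

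This minor-arc claim is the main obstacle, and we argue by contradiction. Suppose $\lVert\theta a_j/2\pi\rVert<\eta$ for all but $\eta n$ values of $j$, i.e.\ most $a_j$ lie in a rank-one Bohr set. Since $a_j$ is a polynomial of degree at most $k$ in $j$, the finite-difference identities of Lemma~\ref{l:even-vs-odd} (applied with many shifted bases and varied steps $h_i$, as in the switch constructions of Section~\ref{s:even-k}) express small fixed integers, and more generally long arithmetic progressions of controlled integers, as $\pm$-combinations of boundedly many values $a_j$. The $\pm$ sign pattern is that of~\eqref{eq:fin-dif-switch-val-1}. By pigeonhole we can choose all the participating $j$ from the good set, so these bounded sums also have $\lVert\cdot\rVert\le 2^k\eta$.

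Concretely, $\Delta_{h_1}\cdots\Delta_{h_r}a(s)$ for $r=\deg F-1$ is a linear function of $s$ whose common difference is $g^{-1}$ times a constant coming from the leading coefficient. For $k$ odd this constant is $(n-k+1)$ times a bounded factor, which is why the normalisation by $g$ matters. Varying $s$ produces an arithmetic progression $\{b+dt: t\le\eta' n\}$ all of whose elements satisfy $\lVert\theta(\cdot)/2\pi\rVert\le C\eta$. A progression of length $\gg n$ all lying near integers after scaling forces $\lVert\theta d/2\pi\rVert\ll\eta/n$; combined with the top difference, this shows that $\theta\cdot(\text{constant})/2\pi$ is within $O(\eta/n)$ of an integer. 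Iterating down the degrees, $\theta a_j/2\pi$ is within $o(1)$ of an integer for all $j$. The gcd of the $a_j$ is $1$, hence some bounded integer combination of the $a_j$ equals $1$ (uniformly in $n$; the exact form of $g$ from Appendix~\ref{s:gcd} would be needed here). This gives $\lVert\theta/2\pi\rVert\ll\eta$ and then $\lVert\theta/2\pi\rVert\ll \eta n^{-k}$. That contradicts $|\theta|\ge c/n^k$ for $\eta$ small, except at $\theta=0$.

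The delicate points are making the combination yielding $1$ have bounded length uniformly in $n$, and bookkeeping the scales so that the near-integer conditions propagate through all derivative orders. With the minor-arc bound in hand, summing the three contributions gives~\eqref{eq:lclt}.
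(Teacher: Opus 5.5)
Your architecture matches the paper's: Fourier inversion on $g_{n,k}\Z$, a Gaussian approximation near $0$, quadratic decay while all phases $\theta a_j$ are small, and a Bohr-set dichotomy elsewhere. Your major arc is also fine, since Esseen's lemma with Lyapunov ratio $\asymp(pn)^{-1/2}$ can replace the Giuliano--Weber lemma. The gaps are all in the minor arcs, which you rightly call the crux.

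First, a scale slip. All phases are small exactly when $|\theta|\lesssim 1/\max_j|a_j|\asymp g_{n,k}/n^k$, and for odd $k$ this is $n^{1-k}$, not $n^{-k}$. On a range $c/n^k\le|\theta|\ll\eta\,n^{1-k}$ every $a_j$ lies in your Bohr set, and for $|\theta|$ of order $n^{-k}$ one even has $|\phi_{S'}(\theta)|=1-o(1)$. So for odd $k$ both your minor-arc claim and the target bound $\exp(-\Omega_k(pn))$ are false there. Throughout, $n^k$ should be $n^D$ with $D=\deg F_{n,k}$.

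More seriously, your concrete mechanism ($(D-1)$-fold differences with fixed steps and varying base) only yields progressions of length $\asymp n$. That pins $\theta/2\pi$ to within $O(\eta/n)$ of a rational $m/d_0$, where $d_0=\Delta_1^Da$ is a bounded nonzero integer by Lemma~\ref{l:even-vs-odd}. This determines $\theta a_j/2\pi$ only up to $O(\eta n^{D-1})$, which is useless since $D\ge2$. ``Iterating down the degrees'' is not explained, and its natural reading does not help: lower-order differences with bounded steps are polynomials in $s$ with coefficients as large as $n^2,n^3,\dots$, so they amplify this uncertainty rather than reduce it. What you need is a progression of bounded difference and length $\gg n^D$ inside a bounded signed sumset of $\{a_j:j\in A\}$; the paper gets it from Proposition~\ref{p:iter-sumset} together with Nathanson--S\'ark\"ozy. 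An elementary substitute in your spirit is to make all $D$ steps large:
\begin{itemize}
\item $\Delta_{h_1}\cdots\Delta_{h_D}a\equiv d_0h_1\cdots h_D$ for $h_i\le M:=\lfloor n/(2D)\rfloor$.
\item Choose a base $s$ with all $2^D$ points in $A$; this is possible once $2^D\lvert[n-1]\setminus A\rvert$ is a small fraction of $n$.
\item Base-$M$ expansion then writes every $d_0N$, $N<M^D$, as a signed sum of at most $D2^D$ values $a_j$ with $j\in A$.
\end{itemize}
Lemma~\ref{l:recurrence} then gives $\lVert d_0\theta/2\pi\rVert_{\R/\Z}\ll\eta n^{-D}$.

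The second gap is ruling out $m\not\equiv 0\pmod{d_0}$. From $\theta/2\pi=m/d_0+O(\eta n^{-D})$ you learn that $\lVert ma_j/d_0\rVert_{\R/\Z}$ is small only for $j\in A$. Hence $K:=d_0/(d_0,m)$ divides $a_j$ for $j\in A$, but nothing yet gives near-integrality of $\theta a_j/2\pi$ for $j\notin A$, as you assert. The inference ``the gcd of the $a_j$ is $1$, hence a bounded integer combination equals $1$'' is a non sequitur: B\'ezout coefficients for integers of size $n^D$ need not be bounded, and the combination would in any case have to use only indices in $A$. The missing idea is the paper's periodicity argument. Since $2k!F_{n,k}/g_{n,k}$ has integer coefficients, its residues modulo the bounded modulus $2k!K$ depend only on $j \bmod 2k!K$. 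Because $A$ has density $1-c_k$, it meets every such residue class. So $K$ would divide every value of $F_{n,k}/g_{n,k}$ on $\Z$, contradicting the definition of $g_{n,k}$ unless $K=1$. Then $d_0\mid m$, so $|\theta|\ll\eta n^{-D}$, all phases are small, and you are back in your Gaussian range. With these two repairs your outline becomes a complete proof.
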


We have the following immediate corollary.

\begin{corollary}
    \label{c:subset-sums}
    For any $k\geq 2$, there exists a constant $C_k > 0$ such that the following holds. Let $n\in\N$ be sufficiently large and let $m \in [C_kn^k\log n, \sum_{j=1}^{n-1}F_{n,k}(j)-C_kn^k\log n] \cap g_{n,k}\Z$. Then there exists a subset $A \subseteq [n-1]$ such that $\sum_{a\in A}F_{n,k}(a) = m$.
\end{corollary}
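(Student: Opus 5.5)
The plan is to derive Corollary~\ref{c:subset-sums} from Theorem~\ref{t:lclt} by choosing the Bernoulli parameter $p$ so that the mean $\mu$ is exactly $m$. Then the Gaussian main term is as large as possible, $\varphi(0)=1/\sqrt{2\pi}$, and we only need the error terms in \eqref{eq:lclt} to be smaller than this. Once that holds, $\P(S=m)>0$, so some realisation $A=\{j:X_j=1\}$ satisfies $\sum_{a\in A}F_{n,k}(a)=m$.

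By the symmetry $A\mapsto[n-1]\setminus A$, which sends subset sums $m$ to $\Sigma-m$ with $\Sigma\vcentcolon=\sum_{j=1}^{n-1}F_{n,k}(j)$, it suffices to treat $m\le\Sigma/2$. Since $\Sigma\in g_{n,k}\Z$, the value $\Sigma-m$ still lies on the lattice and in the allowed range. For such $m$ put $p\vcentcolon=m/\Sigma\in(0,\tfrac12]$, so that $\mu=p\Sigma=m$.

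Since $\Sigma\asymp_k n^{k+1}$, the lower bound $m\ge C_kn^k\log n$ gives
\[
pn\asymp_k \frac{m}{n^k}\ge c\,C_k\log n
\]
for some $c=c(k)>0$. In particular $p\ge 1/n$ for large $n$, so Theorem~\ref{t:lclt} applies with $m\in g_{n,k}\Z$.

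Taking $(m-\mu)/\sigma=0$ in \eqref{eq:lclt} gives
\[
\frac{\sigma}{g_{n,k}}\P(S=m)\ge \frac{1}{\sqrt{2\pi}}-O_k\bigl((pn)^{-1/6}\bigr)-O_k\bigl(\sigma e^{-c'pn}\bigr).
\]
The first error term is $O_k((C_k\log n)^{-1/6})$, which tends to $0$. For the second, $\sigma\ll_k n^{k+1/2}$, while $e^{-c'pn}\le n^{-c'cC_k}$. Choosing $C_k$ large enough that $c'cC_k>k+1$ makes this term $O(n^{-1/2})$. Hence for $n$ large the right-hand side is positive, so $\P(S=m)>0$, which finishes the proof.

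The only delicate point is this choice of $C_k$: the implicit constant in $\Omega_k(pn)$ is fixed by Theorem~\ref{t:lclt}, and the $\log n$ in the hypothesis is exactly what beats the polynomial factor $\sigma$. It is also where the restriction $p\le\tfrac12$ forces the complementation step. Everything else is routine bookkeeping of constants depending on $k$.
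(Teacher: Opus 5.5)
Your proposal is correct and follows the paper's proof essentially step for step: reduce to $m\le\frac12\sum_{j=1}^{n-1}F_{n,k}(j)$ by complementation, take $p=m/\sum_{j=1}^{n-1}F_{n,k}(j)$ so that $\mu=m$, and use $\sigma\ll_k n^{k+1/2}$ together with $pn\gg_k C_k\log n$ to make both error terms in Theorem~\ref{t:lclt} $o_k(1)$. Your write-up only adds detail the paper leaves implicit, namely the explicit choice of $C_k$ making the exponent beat $n^{k+1/2}$, and the observation that $\sum_{j=1}^{n-1}F_{n,k}(j)\in g_{n,k}\Z$, so complementation preserves both the lattice condition and the allowed range.
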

\begin{proof}
    By symmetry, we may assume without loss of generality that $m \leq \frac{1}{2}\sum_{j=1}^{n-1}F_{n,k}(j)$. We apply Theorem~\ref{t:lclt} with $p = m/\sum_{j=1}^{n-1}F_{n,k}(j) \gg_k (C_k\log n)/n$. Recalling that $\sigma \ll_k n^{k+\frac{1}{2}}$, it follows that, provided $C_k > 0$ is sufficiently large,
    \[
        \Bigl|\frac{\sigma}{g_{n,k}}\P(S = m) - \frac{1}{\sqrt{2\pi}}\Bigr| \leq o_k(1).
    \]
    Hence, if $n$ is large enough, then $\P(S = m) > 0$, so the desired conclusion follows.
\end{proof}

To prove Theorem~\ref{t:lclt}, we employ an approach based on characteristic functions, i.e.\ Fourier analysis. Specifically, by Fourier inversion, for $m \in g_{n,k}\Z$ we have
\begin{equation}
    \label{eq:fourier-inv}
    \P(S = m) = \frac{1}{2\pi}\int_{-\pi}^{\pi}\phi_{S/g_{n,k}}(t)e^{-itm/g_{n,k}}\,dt.
\end{equation}
The idea is to split the range of integration into two regions: a small interval around $0$ and the rest. The former can be thought of as a `major arc' and will account for the main term in the asymptotic. Similarly, the latter corresponds to the `minor arcs' in the context of the circle method, and will give rise to an error term.

The major arc can be dealt with using a result of Giuliano and Weber \cite{GiulianoWeber}, which in our setting reads as follows.

\begin{lemma}
    \label{l:main-term}
    Let $\delta \in (0,\frac{2}{3}]$ and $\tau \vcentcolon= \delta(p\sum_{j=1}^{n-1}F_{n,k}(j)^3)^{-1/3}$. Then for any $m \in g_{n,k}\Z$,
    \[
        \Bigl|\frac{\sigma}{2\pi g_{n,k}}\int_{-\tau g_{n,k}}^{\tau g_{n,k}}\phi_{S/g_{n,k}}(t)e^{-itm/g_{n,k}}\,dt - \varphi\Bigl(\frac{m-\mu}{\sigma}\Bigr)\Bigr| \ll \sigma\tau\delta^3 + e^{-\sigma^2\tau^2/2}.
    \]
\end{lemma}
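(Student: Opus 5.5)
This is a specialisation of the local limit theorem of Giuliano and Weber \cite{GiulianoWeber}. Rather than only citing it, I would check it directly with the classical characteristic-function argument.

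\textbf{Setup.} Substitute $t=g_{n,k}u$ in the integral. Since $m\in g_{n,k}\Z$, the left-hand side becomes
\[
\frac{\sigma}{2\pi}\int_{-\tau}^{\tau}\phi_S(u)e^{-ium}\,du .
\]
Write $F_j=F_{n,k}(j)$. Then
\[
\phi_S(u)e^{-iu\mu}=\prod_{j=1}^{n-1}\psi_j(u),\qquad \psi_j(u):=\E\bigl[e^{iuF_j(X_j-p)}\bigr].
\]
The main term comes from comparing this product with $e^{-\sigma^2u^2/2}=\prod_j e^{-p(1-p)F_j^2u^2/2}$.

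\textbf{Step 1: pointwise comparison.} For $|u|\le\tau$ I would prove
\[
\Bigl|\prod_j\psi_j(u)-e^{-\sigma^2u^2/2}\Bigr|\ll p\sum_j F_j^3\,|u|^3 .
\]
\begin{itemize}
\item All factors have modulus at most $1$, so telescoping gives $|\prod a_j-\prod b_j|\le\sum_j|a_j-b_j|$. It therefore suffices to bound each term with $x=uF_j$.
\item Let $Z=X_j-p$. Taylor expansion gives $\psi_j=1-\tfrac12p(1-p)x^2+O(|x|^3\E|Z|^3)$, and $\E|Z|^3\le p$.
\item Also $e^{-v}=1-v+O(v^2)$ with $v=p(1-p)x^2/2$.
\item The key point is that $|u|\le\tau$ forces $p|x|^3\le p\sum_jF_j^3\,\tau^3=\delta^3\le1$. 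Hence $p|x|\le p^{2/3}\le1$, and so $v^2\ll p^2x^4=p|x|^3\cdot p|x|\le p|x|^3$.
\item Each factor difference is therefore $O(p|uF_j|^3)$. Summing over $j$ gives the claimed bound.
\end{itemize}

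\textbf{Step 2: integrate the error.} Integrating the Step 1 bound over $|u|\le\tau$ and multiplying by $\sigma$ gives an error
\[
\ll\sigma\,p\sum_jF_j^3\,\tau^4=\sigma\tau\delta^3 .
\]
This is the first error term.

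\textbf{Step 3: the Gaussian integral.} It remains to evaluate
\[
\frac{\sigma}{2\pi}\int_{-\tau}^{\tau}e^{-\sigma^2u^2/2}e^{-iu(m-\mu)}\,du .
\]
Extending the range to all of $\R$ gives exactly $\varphi\bigl((m-\mu)/\sigma\bigr)$, by the Fourier transform of the Gaussian. The tail is
\[
\frac{\sigma}{2\pi}\int_{|u|>\tau}e^{-\sigma^2u^2/2}\,du=\frac1{2\pi}\int_{|w|>\sigma\tau}e^{-w^2/2}\,dw .
\]
If $\sigma\tau\ge1$, the standard tail bound makes this $\ll e^{-\sigma^2\tau^2/2}$. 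If $\sigma\tau<1$, it is at most $\sqrt{2\pi}/(2\pi)=O(1)$, which is again $O(e^{-\sigma^2\tau^2/2})$ since $e^{-\sigma^2\tau^2/2}\ge e^{-1/2}$. This is the second error term.

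\textbf{Main obstacle.} The only delicate point is Step 1, because $p$ may be as small as $1/n$. In that regime $|uF_j|$ can be large on the range $|u|\le\tau$, so one cannot simply expand $\log\psi_j$. The argument above avoids this by using only the scale-invariant smallness $p|uF_j|^3\le\delta^3$, which is exactly what the choice of $\tau$ guarantees. This gives the bound with an absolute implied constant. If needed, one could instead quote \cite{GiulianoWeber} after checking that the third-moment quantity $p\sum_j F_j^3$ is the one appearing in their hypotheses.
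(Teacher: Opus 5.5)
Your argument is correct, but it takes a different route from the paper. The paper proves this lemma purely by citation: it applies \cite[Lemma 3.4]{GiulianoWeber} with $\nu=n-1$, $k_j=F_{n,k}(j)/g_{n,k}$, $\vartheta_j=1-p$, and $\delta/(2\pi)$, $m/g_{n,k}$ in place of $\delta$, $n$.

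You instead reprove the estimate directly, and each step checks out:
\begin{enumerate}
\item The substitution $t=g_{n,k}u$ removes the lattice normalisation entirely.
\item The telescoping bound $\bigl|\prod_j a_j-\prod_j b_j\bigr|\le\sum_j|a_j-b_j|$ applies because all factors have modulus at most $1$.
\item The third-order Taylor bound for the characteristic function of $X_j-p$, whose third absolute moment is at most $p$, handles each $\psi_j$. The bound $|e^{-v}-1+v|\le v^2/2$ handles each Gaussian factor.
\item The choice of $\tau$ enters only through $p|uF_j|^3\le\delta^3\le1$. This gives $p|uF_j|\le1$ and hence $v^2\ll p|uF_j|^3$. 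It uses $F_{n,k}(j)\ge0$ on $[n-1]$, which holds by convexity of $j\mapsto\binom{j}{k}+\binom{n-j}{k}$.
\item Integrating gives $\ll\sigma\,p\sum_jF_j^3\,\tau^4=\sigma\tau\delta^3$. The Gaussian tail beyond $\sigma\tau$ is $\ll e^{-\sigma^2\tau^2/2}$ in both cases $\sigma\tau\ge1$ and $\sigma\tau<1$.
\end{enumerate}

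The citation buys brevity. Your version buys self-containedness, absolute constants, and transparency. For instance, it shows that $\delta\le1$ suffices, that $m\in g_{n,k}\Z$ plays no role at this step, and that nothing about the weights beyond non-negativity is used. It also avoids having to match the normalisations and notation of Giuliano--Weber.

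One minor correction to your closing remark: in the present setting the ``main obstacle'' does not actually arise. Since $p\ge1/n$, $\max_jF_{n,k}(j)\ll_k n^k$ and $\sum_jF_{n,k}(j)^3\gg_k n^{3k+1}$, one has $|uF_{n,k}(j)|\ll_k\delta(pn)^{-1/3}\ll_k1$ throughout $|u|\le\tau$. So expanding $\log\psi_j$ would also have worked here, though your argument is more robust and does not need this.
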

\begin{proof}
    This is immediate from \cite[Lemma 3.4]{GiulianoWeber}, applied with $\nu = n-1$, $k_j = F_{n,k}(j)/g_{n,k}$, $\vartheta_j = 1-p$ for $j \in [\nu]$ and $\delta/(2\pi)$, $m/g_{n,k}$ in place of $\delta$ and $n$ respectively.
\end{proof}

As indicated in \cite{GiulianoWeber}, the estimation of the minor arcs is where the main difficulty lies. The bounds from \cite[\S3]{GiulianoWeber} are not sufficient for our purposes, so we establish the following.

\begin{proposition}
    \label{p:domination}
    For any $t \in [-\pi,\pi]$,
    \begin{equation*}
        \bigl|\phi_{S/g_{n,k}}(t)\bigr| \leq \exp\Bigl(-\min\bigl(\Omega_k(p(1-p)n), 2\sigma^2t^2/(\pi g_{n,k})^2\bigr)\Bigr).
    \end{equation*}
\end{proposition}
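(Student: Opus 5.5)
The plan is to bound $\phi_{S/g_{n,k}}(t)$ factor by factor and then split according to whether $t$ is genuinely small or not. Write $a_j \vcentcolon= F_{n,k}(j)/g_{n,k}\in\Z$ and $x \vcentcolon= t/(2\pi)\in[-\tfrac12,\tfrac12]$. By independence,
\[
    \bigl|\phi_{S/g_{n,k}}(t)\bigr|^2=\prod_{j=1}^{n-1}\bigl(1-2p(1-p)(1-\cos(ta_j))\bigr)\leq \exp\Bigl(-2p(1-p)\sum_{j}(1-\cos(ta_j))\Bigr).
\]
We then use $1-\cos\theta=2\sin^2(\theta/2)\geq 8\lVert \theta/(2\pi)\rVert_{\R/\Z}^2$, which gives
\[
    \bigl|\phi_{S/g_{n,k}}(t)\bigr|\leq \exp\Bigl(-8p(1-p)\sum_j\lVert xa_j\rVert_{\R/\Z}^2\Bigr).
\]
If $|xa_j|\leq\tfrac12$ for every $j$ (no wrap-around), then $\lVert xa_j\rVert=|x|a_j$. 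The exponent is then $8p(1-p)x^2\sum_j a_j^2=2\sigma^2t^2/(\pi g_{n,k})^2$, which is exactly the second term in the minimum. So it suffices to show that whenever wrap-around occurs, $\sum_j\lVert xa_j\rVert^2\gg_k n$; this yields the first term.

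Fix a small $\eta=\eta(k)>0$ and let $B=\{j:\lVert xa_j\rVert\leq\eta\}$. If $|[n-1]\setminus B|\geq \eta n$, then $\sum_j\lVert xa_j\rVert^2\geq \eta^3 n$ and we are done. So assume $B$ contains all but $\eta n$ indices, that is, most normalised values lie in a rank-one Bohr set.

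The key step, and the main obstacle, is to show that this forces $x$ to be tiny: $|x|\ll_k \eta/\max_j a_j$, so that no wrap-around occurs at all and we are back in the Gaussian case. I would first use finite differences. Since $B$ has density $1-\eta$, for most $j$ the whole block $j,j+1,\dots,j+D$ with $D=\deg F_{n,k}$ lies in $B$. The identities \eqref{eq:lin-comb-even} and \eqref{eq:lin-comb-odd} then give $\lVert x\,c_{n,k}/g_{n,k}\rVert\leq 2^D\eta$, where $c_{n,k}\in\{2,n-k+1\}$ is the constant $D$-th difference. More usefully, lower-order differences $\Delta^{r}a_j$ are polynomials in $j$ with small increments. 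I would show that a bounded iterated sumset $\pm B'\pm\dots\pm B'$ of the values $\Delta^{D-1}a_j$ contains a long arithmetic progression $\{u,2u,\dots,Lu\}$ with $L\gg_k n$, since these values form a linear progression in $j$. Then $\lVert xlu\rVert\ll_k\eta$ for all $l\leq L$. The standard real-lifting argument (if $\lVert y\rVert$ is small and $\lVert ly\rVert$ stays small for all $l\leq L$, then $\lVert y\rVert\ll\eta/L$) gives $\lVert xu\rVert\ll \eta/n$. Iterating down through the differences $\Delta^{r}$, $r=D-1,\dots,0$, propagates this control to an integer combination of the $a_j$ equal to $\gcd_j a_j=1$, which shows that $x$ is within $O_k(\eta/\max a_j)$ of an integer. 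This is where the normalisation by $g_{n,k}$ is used: it removes the possible nonzero rational frequencies $x\in\frac1{g}\Z$. Since $|x|\leq\tfrac12$, that integer must be $0$.

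One intermediate range also needs care: $x$ close to $0$ but with $|x|\max a_j>\tfrac12$. Here the $a_j$ for $j\in[n/3,2n/3]$ vary smoothly on scale $\asymp n^{k}/g_{n,k}$, with increments of order $|a_{j+1}-a_j|\ll n^{k-1}/g_{n,k}$. So $xa_j$ traverses an interval of length $\gg1$ in steps that are mostly $\leq\eta$, and a positive proportion of $j$ in the middle range then have $\lVert xa_j\rVert\geq\eta$, contradicting the density assumption on $B$. Combining the cases gives the claimed bound, with the implied constant in $\Omega_k(p(1-p)n)$ depending on $\eta(k)$.
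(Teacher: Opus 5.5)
Your proposal is viable, and it reaches the key step by a genuinely different route from the paper. The outer structure is the same as the paper's: the bound $\lvert\phi_{S/g_{n,k}}(t)\rvert\le\exp(-8p(1-p)\sum_j\lVert xa_j\rVert_{\R/\Z}^2)$, the Bohr-set dichotomy, and, in the dense case, showing that $x$ is so small that no wrap-around occurs. The paper gets its additive structure from general tools, in this order:
\begin{enumerate}
\item sumset growth for polynomial images of dense sets (Proposition~\ref{p:iter-sumset}, proved via the Green--Tao form of Weyl's inequality);
\item the Nathanson--S\'ark\"ozy theorem, giving a progression of length $\gg n^{D}$ with bounded difference inside a bounded iterated sumset, where $D=\deg F_{n,k}$;
\item the observation that $k!$ lies in the Bohr set;
\item a single application of Lemma~\ref{l:recurrence} at scale $n^D$;
\item a residue-class argument that removes the remaining rational frequency.
\end{enumerate}
You instead use directly that $B$ has density $1-\eta$ and that the $D$-th differences of $a$ are constant. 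This is elementary and self-contained, with no Weyl inequality and no Nathanson--S\'ark\"ozy, but it is tied to exact polynomial structure. The paper's route passes through a more general sumset statement.

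That said, your sentence ``iterating down through the differences \ldots'' carries the whole argument and needs to be spelled out. Taken literally, a bounded integer combination of the $a_j$ equal to $1$ only transfers $O(\eta)$-closeness; it cannot give precision $\eta n^{-D}$. What makes it work is a multi-scale recurrence argument.
\begin{enumerate}
\item Set $u_0=c_{n,k}/g_{n,k}$, which divides $k!$ by Lemma~\ref{l:even-vs-odd}, and $H=\lfloor n/(4D)\rfloor$. For $\eta$ small, a union bound places, for every $1\le h_1,\dots,h_D\le H$, some parallelepiped $j+\{0,1\}^D\cdot\mathbf h$ inside $B$. Hence $\lVert xu_0h_1\cdots h_D\rVert_{\R/\Z}\le 2^D\eta$.
\item Taking $\mathbf h=(\ell,1,\dots,1)$ and applying Lemma~\ref{l:recurrence} gives $xu_0=m_0+\epsilon$ with $|\epsilon|\le 2^D\eta/H$.
\item For $i=1,\dots,D-1$, take $\mathbf h=(\ell,H,\dots,H,1,\dots,1)$ with $i$ copies of $H$. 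The previous stage gives $|H^i\epsilon|\le 2^D\eta<1/3$, which lets you lift, and Lemma~\ref{l:recurrence} then gives $|\epsilon|\le 2^D\eta/H^{i+1}$.
\item Hence $|x-m_0/u_0|\ll_k\eta n^{-D}$, so $\lVert (m_0/u_0)a_j\rVert_{\R/\Z}\ll_k\eta$ for $j\in B$. Since $u_0\le k!$ and $\eta$ is small, this forces $(m_0/u_0)a_j\in\Z$ on $B$.
\item Only now does the normalisation by $g_{n,k}$ enter. The set $B$ contains $D+1$ consecutive integers, and by Newton interpolation every value of $a$ on $\Z$ is an integer combination of those $D+1$ values. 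So $\gcd_{j\in B}a_j=1$, whence $m_0/u_0\in\Z$, and therefore $m_0=0$. This step replaces the paper's residue-class argument.
\end{enumerate}
Once this is done, your ``intermediate range'' paragraph is superfluous. On its own, the smoothness argument there only handles $|x|\ll\eta n^{1-D}$, so it could not substitute for the iteration when $D\ge 3$.
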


We first quickly show how Theorem~\ref{t:lclt} follows from Lemma~\ref{l:main-term} and Proposition~\ref{p:domination}.

\begin{proof}[Proof of Theorem~\ref{t:lclt} assuming Proposition~\ref{p:domination}]
    By~\eqref{eq:fourier-inv} and Lemma~\ref{l:main-term}, the left-hand side in~\eqref{eq:lclt} is at most a constant times
    \[
        \sigma\tau\delta^3 + e^{-\sigma^2\tau^2/2} + \frac{\sigma}{2\pi g_{n,k}}\int_{[-\pi,\pi]\setminus[-\tau g_{n,k},\tau g_{n,k}]}|\phi_{S/g_{n,k}}(t)|\,dt,
    \]
    where $\delta \in (0,\frac{2}{3}]$ is arbitrary and $\tau \vcentcolon= \delta(p\sum_{j=1}^{n-1}F_{n,k}(j)^3)^{-1/3}$. By Proposition~\ref{p:domination} and a change of variables, the last term in the preceding display is at most
    \[
        \frac{\sigma}{g_{n,k}}\exp\bigl(-\Omega_k(p(1-p)n)\bigr) + \frac{1}{2\pi}\int_{\R\setminus[-\sigma\tau,\sigma\tau]}e^{-2s^2/\pi^2}\,ds,
    \]
    where the integral is at most $e^{-\Omega(\sigma^2\tau^2)}$. The conclusion now follows by choosing $\delta \vcentcolon= \frac{2}{3}(pn)^{-1/12}$ and noting that, with this choice of $\delta$, we have $\sigma\tau \asymp_k (pn)^{1/12}$.
\end{proof}

It remains to prove Proposition~\ref{p:domination}, and we devote the remainder of the section to this task. We start with the following result, which states that polynomial images of dense subsets of intervals expand quickly under addition. This fact is presumably known to experts, but we were unable to locate an exact reference in the literature, so we provide a proof.

\begin{proposition}
    \label{p:iter-sumset}
    Let $d$ be a positive integer. Then there exist $c > 0$ and $q \in \N$ such that the following holds. Let $f \colon \Z \to \Z$ be a polynomial function of degree $d$. Let $N \in \N$ and $A \subseteq [N]$ be such that $|A| \geq N/2$. Then
    \[
        \Bigl|\Bigl\{\sum_{j=1}^{q}f(x_j) : x_1,\ldots,x_q \in A\Bigr\}\Bigr| \geq cN^d.
    \]
\end{proposition}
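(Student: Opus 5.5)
The plan is to prove Proposition~\ref{p:iter-sumset} by a second-moment (additive energy) argument. The point is to get a lower bound that depends on $f$ only through its degree $d$, not through its coefficients.

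\textbf{Step 1 (reduction to counting solutions).} For $q\in\N$, let $E_q(A)$ denote the number of solutions of
\[
    f(x_1)+\dots+f(x_q)=f(y_1)+\dots+f(y_q),\qquad x_i,y_i\in A.
\]
Write $r(m)$ for the number of representations of $m$ as $f(x_1)+\dots+f(x_q)$ with $x_j\in A$. By Cauchy--Schwarz,
\[
    |A|^{2q}=\Bigl(\sum_m r(m)\Bigr)^2\le \bigl|qf(A)\bigr|\cdot \sum_m r(m)^2=\bigl|qf(A)\bigr|\, E_q(A),
\]
where $qf(A)$ denotes the $q$-fold sumset. Since $A\subseteq[N]$, we trivially have $E_q(A)\le J_q(f;N)$, the same count with all variables ranging over $[N]$. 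Because $|A|\ge N/2$, it therefore suffices to show
\[
    J_q(f;N)\ll_d N^{2q-d}
\]
for some $q=q(d)$, uniformly in the polynomial $f$ of degree $d$. We then take $c=2^{-2q}/C_d$.

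\textbf{Step 2 (linearising into a Vinogradov system).} Write $f(x)=\sum_{j=0}^d c_jx^j$ with $c_j\in\Q$ and $c_d\ne0$. For a solution, set $s_j=\sum_i(x_i^j-y_i^j)$ for $1\le j\le d$. The equation reads $\sum_{j=1}^d c_js_j=0$. Once $s_1,\dots,s_{d-1}$ are fixed, $s_d$ is uniquely determined because $c_d\neq0$. This is the only place the coefficients enter, and it enters only through $c_d\ne0$; this is what gives uniformity in $f$.

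Each $s_j$ ranges over at most $2qN^j+1$ integers. Hence
\[
    J_q(f;N)\le \sum_{|s_j|\le 2qN^j,\ j<d} J_{q,d}(N;\mathbf s),
\]
where $J_{q,d}(N;\mathbf s)$ counts solutions of the full Vinogradov system $\sum_i(x_i^j-y_i^j)=s_j$ for $1\le j\le d$. By orthogonality and the triangle inequality, $J_{q,d}(N;\mathbf s)\le J_{q,d}(N;\mathbf 0)=:J_{q,d}(N)$. This gives
\[
    J_q(f;N)\ll_{q,d} N^{1+2+\dots+(d-1)}\,J_{q,d}(N)=N^{d(d-1)/2}J_{q,d}(N).
\]

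\textbf{Step 3 (Vinogradov's mean value theorem).} Now invoke the main conjecture in Vinogradov's mean value theorem (Bourgain--Demeter--Guth, or Wooley for the cubic and efficient-congruencing versions). In the supercritical range $q>d(d+1)/2$ it gives the $\eps$-free bound $J_{q,d}(N)\ll_{q,d}N^{2q-d(d+1)/2}$. Choosing $q=d(d+1)/2+1$ then yields $J_q(f;N)\ll_d N^{2q-d}$, which completes the argument.

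The main obstacle is Step 3: we need the bound without an $N^\eps$ loss, since an $\eps$-loss would only give $|qf(A)|\gg N^{d-\eps}$. I would first rely on the standard fact that the $\eps$ can be removed strictly above the critical exponent. If one prefers to avoid the full strength of the main conjecture, a fallback is Hua's lemma combined with a Weyl-differencing count. In that route the final step uses the divisor bound, so it loses $N^\eps$. That loss would then have to be removed by taking $q$ larger and applying a standard major/minor arc argument.
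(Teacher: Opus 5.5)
Your proof is correct. It shares the paper's first step: Cauchy--Schwarz reduces everything to the energy bound $J_q(f;N)\ll_d N^{2q-d}$, uniformly in $f$. It then proves that bound by a genuinely different route.

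The paper stays one-dimensional. After clearing denominators (harmless, since scaling $f$ does not change the size of the sumset), it writes the energy as $N^{2q}\int_0^1|S(\theta)|^{2q}\,d\theta$, where $S(\theta)$ is the normalised Weyl sum of $f(n)\theta$ over $[N]$. It uses the quantitative Weyl inequality of Green--Tao to show that $\{\theta:|S(\theta)|\geq\delta\}$ has measure $O(\delta^{-O_d(1)}N^{-d})$. It then integrates by the layer-cake formula, taking $q$ larger than the exponent in Weyl's inequality.

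You instead linearise into the full Vinogradov system. You pay a factor $N^{d(d-1)/2}$ for the lower power-sum differences $s_1,\dots,s_{d-1}$, and recover it exactly from the sharp supercritical mean value bound $J_{q,d}(N)\ll N^{2q-d(d+1)/2}$. In both arguments $f$ enters only through its leading coefficient, which is where the uniformity comes from.

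What each route buys:
\begin{itemize}
\item Your route gives an explicit $q=d(d+1)/2+1$. The price is very heavy input: the main conjecture of Bourgain--Demeter--Guth and Wooley, plus the circle-method removal of the $N^{\varepsilon}$ loss.
\item The paper needs only Weyl differencing, and leaves $q$ inexplicit.
\end{itemize}
Since the proposition allows any $q=q(d)$, you could lighten your input. The $\varepsilon$-free bound for $J_{s,d}(N)$ was available long before the main conjecture once $s$ is sufficiently large in terms of $d$.

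Your Hua's-lemma fallback is unnecessary. As sketched it would also need care: the divisor bound there is applied to integers whose size depends on the coefficients of $f$. That endangers exactly the uniformity you correctly identified as the crux.
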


In the proof, we will use the following variant of Weyl's inequality, due to Green and Tao \cite[Lemma 4.4]{GreenTao}. In what follows, we use the standard notation $e(\theta) \vcentcolon= e^{2\pi i\theta}$ for $\theta\in\R$.

\begin{lemma}
    \label{l:weyl-ineq}
    Let $f \colon \Z \to \R$ be a polynomial of degree $d$ with leading coefficient $\alpha$. Suppose that
    \[
        \bigl|\E_{n\in[N]}e(f(n))\bigr| \geq \delta
    \]
    for some $\delta \in (0,\frac{1}{2})$. Then there exists $\ell \in \N$ such that $\ell \ll \delta^{-O_d(1)}$ and
    \[
        \lVert \ell\alpha\rVert_{\R/\Z} \ll \delta^{-O_d(1)}/N^d.
    \]
\end{lemma}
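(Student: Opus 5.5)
The plan is to follow the classical Weyl differencing argument, keeping track of polynomial dependence on $\delta$ throughout, in the spirit of Green and Tao. Write $\alpha$ for the leading coefficient of $f$. The case $d=1$ is a direct computation. The sum $\E_{n\in[N]}e(\alpha n+\beta)$ is a geometric series, bounded by $\min(1, 1/(2N\lVert\alpha\rVert_{\R/\Z}))$. So the hypothesis gives $\lVert\alpha\rVert_{\R/\Z}\ll \delta^{-1}/N$, and we may take $\ell=1$.

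For $d\geq 2$, the first step is to apply the van der Corput inequality $d-1$ times. Squaring once gives
\[
    \bigl|\E_{n\in[N]}e(f(n))\bigr|^2 \ll \E_{|h|\le N}\Bigl|\E_{n\in[N]\cap([N]-h)}e\bigl(f(n+h)-f(n)\bigr)\Bigr|,
\]
and the phase $f(n+h)-f(n)$ has degree $d-1$ with leading coefficient $d\alpha h$. Iterating $d-1$ times, at least a proportion $\delta^{O_d(1)}$ of tuples $(h_1,\dots,h_{d-1})\in[-N,N]^{d-1}$ satisfy
\[
    \Bigl|\E_{n\in I_{\mathbf h}}e\bigl(d!\,h_1\cdots h_{d-1}\alpha\, n+\beta_{\mathbf h}\bigr)\Bigr|\ge\delta^{O_d(1)},
\]
where each $I_{\mathbf h}$ is an interval of length $\gg \delta^{O_d(1)}N$. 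Applying the linear case to each such tuple gives $\lVert d!\,h_1\cdots h_{d-1}\alpha\rVert_{\R/\Z}\ll\delta^{-O_d(1)}/N$.

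The next step is to pass from this statement about products to a statement about single integers. The number of representations of an integer $m\ll N^{d-1}$ as $h_1\cdots h_{d-1}$ is $N^{o(1)}$, which is too lossy for polynomial bounds. I would therefore instead peel off one variable at a time, in the manner of Green--Tao's Lemma 4.5 on recurrence. That lemma states: if $\lVert m\theta\rVert_{\R/\Z}\le\varepsilon/M$ holds for at least $\eta M$ values of $m\in[M]$, then there is $\ell\ll\eta^{-O(1)}$ with $\lVert\ell\theta\rVert_{\R/\Z}\ll\eta^{-O(1)}\varepsilon/M^{\,}$. Its proof is by pigeonholing on the fractional parts and using that a dense set of multiples being close to $0$ forces a small multiple to be very close.

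Fixing $h_2,\dots,h_{d-1}$ from a popular set, I would apply this lemma in $h_1$ to the frequency $\theta=d!\,h_2\cdots h_{d-1}\alpha$. This produces $\ell_1\ll\delta^{-O(1)}$ with $\lVert \ell_1 d!\,h_2\cdots h_{d-1}\alpha\rVert_{\R/\Z}\ll\delta^{-O(1)}/N^2$. Then I pigeonhole over the popular tuples to fix a common $\ell_1$, and repeat in $h_2$, and so on, gaining a factor of $N$ each time. After $d-1$ steps this yields $\ell=d!\,\ell_1\cdots\ell_{d-1}\ll\delta^{-O_d(1)}$ with $\lVert\ell\alpha\rVert_{\R/\Z}\ll\delta^{-O_d(1)}/N^d$.

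I expect the main obstacle to be this recurrence step. Its delicate points are handling the boundary intervals $I_{\mathbf h}$ uniformly and keeping all losses polynomial in $\delta$ when pigeonholing over the $\ell_i$. The differencing itself is routine.
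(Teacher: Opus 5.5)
The paper does not prove this lemma; it quotes it as \cite[Lemma 4.4]{GreenTao}. Your outline is essentially the standard proof of that result, and it is correct: Weyl differencing down to a linear phase, then peeling off one variable at a time with the Vinogradov-type recurrence lemma, pigeonholing over the bounded multipliers $\ell_i$. Two points in the recurrence lemma need tidying. It should conclude $\lVert\ell\theta\rVert_{\R/\Z}\ll\eta^{-O(1)}\varepsilon/M^2$, which is how you actually use it. It also needs a smallness hypothesis such as $\varepsilon/M\leq c\,\eta$, so you should first dispose of the range $N\ll\delta^{-O_d(1)}$, where $\ell=1$ works trivially.
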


\begin{proof}[Proof of Proposition~\ref{p:iter-sumset}]
    The argument is similar to that in \cite[\S3]{Green}. Certainly $f$ has rational coefficients, so by clearing denominators, we may assume it has integer coefficients. For an integer $y$, write $r(y)$ for the number of $q$-tuples $(x_1,\ldots,x_q) \in A^q$ such that
    \[
        \sum_{j=1}^{q}f(x_j) = y.
    \]
    Then by the Cauchy--Schwarz inequality,
    \begin{equation}
        \label{eq:cauchy-schwarz}
        |A|^{2q} = \Bigl(\sum_{y\in\Z}r(y)\Bigr)^2 \leq |\supp(r)|\Bigl(\sum_{y\in\Z}r(y)^2\Bigr).
    \end{equation}
    But by a standard Fourier-analytic calculation, we have
    \begin{align}
        \sum_{y\in\Z}r(y)^2 
        &\leq
        \Bigl|\Bigl\{(x_1,\ldots,x_q,x_{q+1},\ldots,x_{2q}) \in [N]^{2q} : \sum_{j=1}^{q}f(x_j) = \sum_{j=q+1}^{2q}f(x_j)\Bigr\}\Bigr| \nonumber\\
        &= N^{2q}\int_{0}^{1}|S(\theta)|^{2q}\,d\theta, \label{eq:fourier-energy}
    \end{align}
    where for $\theta \in [0,1]$ we define the normalised exponential sum
    \[
        S(\theta) \vcentcolon= \E_{n\in[N]}e(f(n)\theta).
    \]
    By layer cake representation, we have
    \[
        \int_{0}^{1}|S(\theta)|^{2q}\,d\theta = \int_{0}^{1}2q\delta^{2q-1}\lambda(L_{\delta})\,d\delta,
    \]
    where $L_{\delta} \vcentcolon= \{\theta \in [0,1] : |S(\theta)| \geq \delta\}$ and $\lambda$ denotes the Lebesgue measure on $[0,1]$. Writing $a \in \Z\setminus\{0\}$ for the leading coefficient of $f$, Lemma \ref{l:weyl-ineq} implies that, for $\delta \in (0,\frac{1}{2})$,
    \[
        L_{\delta} \subseteq \bigcup_{1 \leq \ell \leq C\delta^{-C}}\bigl\{\theta \in [0,1] : \lVert \ell a\theta\rVert_{\R/\Z} \leq C\delta^{-C}N^{-d}\bigr\},
    \]
    where $C > 0$ is a constant depending only on $d$. Since the map $\theta \mapsto \{\ell a\theta\}$ preserves $\lambda$ (here $\{\cdot\}$ denotes the fractional part), the union bound implies
    \begin{align*}
        \lambda(L_{\delta}) 
        &\leq 
        \sum_{1\leq \ell \leq C\delta^{-C}}
            \lambda(\{\theta\in[0,1] : \lVert\theta\rVert_{\R/\Z} \leq C\delta^{-C}N^{-d}\}) \\
        &\leq
        C\delta^{-C}\cdot 2C\delta^{-C}N^{-d} 
        =
        2C^2\delta^{-2C}N^{-d}.
    \end{align*}
    For $\delta \in [\frac{1}{2},1]$ we have $L_{\delta} \subseteq L_{\delta'}$ for any $\delta' \in (0,\frac{1}{2})$, whence $\lambda(L_{\delta}) \leq 2^{2C+1}C^2N^{-d}$. Therefore, we obtain that
    \[
        \int_{0}^{1}|S(\theta)|^{2q}\,d\theta \leq 4qC^2N^{-d}\Bigl(\int_{0}^{1/2}\delta^{2q-1-2C}\,d\delta + \int_{1/2}^{1}2^{2C}\delta^{2q-1}\,d\delta\Bigr).
    \]
    Provided we choose say $q = \lceil C\rceil+1$, it follows that
    \[
        \int_{0}^{1}|S(\theta)|^{2q}\,d\theta \leq 4qC^2(1+2^{2C})N^{-d}.
    \]
    The desired conclusion now follows by combining the above with~\eqref{eq:cauchy-schwarz} and~\eqref{eq:fourier-energy}.
\end{proof}

In the proof of Proposition~\ref{p:domination}, we will use the following simple fact about recurrence on the torus, which states that if many consecutive multiples of a number are close to an integer, then this number must be very close to an integer.

\begin{lemma}
    \label{l:recurrence}
    Let $\theta \in \R$ and $L \in \N$ be such that $\lVert\ell\theta\rVert_{\R/\Z} \leq c$ for all $\ell \in [L]$, where $c < \frac{1}{3}$. Then $\lVert\theta\rVert_{\R/\Z} \leq c/L$.
\end{lemma}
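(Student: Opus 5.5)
I will argue by induction on $L$, using the fact that the map $\ell\mapsto \ell\theta$ moves in steps of size $\lVert\theta\rVert_{\R/\Z}$ around the circle. Write $\theta = a + \beta$ with $a\in\Z$ and $\beta = \pm\lVert\theta\rVert_{\R/\Z}\in[-\tfrac12,\tfrac12]$. By symmetry I may assume $\beta\geq 0$. The goal is to show $\ell\beta\leq c$ for all $\ell\in[L]$, since then $L\beta\leq c$, which is exactly $\lVert\theta\rVert_{\R/\Z}\leq c/L$.

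The key claim is that for every $\ell\in[L]$ we have $\ell\beta \leq c$, and hence $\lVert\ell\theta\rVert_{\R/\Z}=\ell\beta$. I will prove this by induction on $\ell$. For $\ell=1$, we have $\beta=\lVert\theta\rVert_{\R/\Z}\leq c$ by the hypothesis with $\ell=1$. Now suppose $(\ell-1)\beta\leq c$ with $\ell\leq L$. Then $\ell\beta = (\ell-1)\beta+\beta \leq 2c < \tfrac23$. Note also $\ell\beta\geq 0$. Since $\ell\theta\equiv \ell\beta \pmod 1$, the hypothesis gives $\lVert\ell\beta\rVert_{\R/\Z}\leq c$. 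The real number $\ell\beta$ lies in $[0,2c]\subset[0,\tfrac23)$, so its distance to the nearest integer is $\min(\ell\beta, 1-\ell\beta)$. If this minimum were $1-\ell\beta$, we would have $\ell\beta\geq 1-c>\tfrac23>2c$, a contradiction. Therefore the minimum is $\ell\beta$, and we get $\ell\beta\leq c$, completing the induction.

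Applying the claim with $\ell=L$ gives $L\lVert\theta\rVert_{\R/\Z}=L\beta\leq c$, as required.

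The only delicate point is ruling out the possibility that some multiple $\ell\theta$ "wraps around" and becomes close to the integer $1$ instead of $0$. The condition $c<\tfrac13$ handles exactly this: a single step from a point within $c$ of $0$ can reach at most $2c$, which remains farther than $c$ from $1$. No earlier result from the paper is needed here.
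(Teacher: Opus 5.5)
Your proof is correct and is essentially the paper's argument. The paper phrases it as a minimal counterexample (the least $\ell\in[L]$ with $|\ell\theta|>c$) rather than as an induction on $\ell$. It uses the same step bound $|\ell\theta|\le|(\ell-1)\theta|+|\theta|\le 2c$, and the same use of $c<\frac{1}{3}$ to force the nearest integer to $\ell\theta$ to be $0$.
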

\begin{proof}
    By translating $\theta$ by an integer, we may assume without loss of generality that $\theta \in [-\frac{1}{2},\frac{1}{2})$. Then we know in particular that $|\theta| \leq c$, and our task is to show that $|\theta| \leq c/L$. Assuming the opposite, we may choose the least $\ell \in [L]$ such that $|\ell\theta| > c$. By minimality, we then have
    \[
        |\ell\theta| \leq |(\ell-1)\theta| + |\theta| \leq c + c = 2c.
    \]
    Thus, if $m$ is the integer closest to $\ell\theta$, then
    \[
        |m| \leq |m-\ell\theta|+|\ell\theta| \leq c + 2c = 3c < 1,
    \]
    whence $m = 0$. But this means that $|\ell\theta| \leq c$, which gives the desired contradiction. 
\end{proof}

We are now ready to prove Proposition~\ref{p:domination}.

\begin{proof}[Proof of Proposition~\ref{p:domination}]
    Since $(X_j)_{j\in[n-1]}$ are independent, we have
    \[
        \bigl|\phi_{S/g_{n,k}}(t)\bigr| = \prod_{j=1}^{n-1}\bigl|\phi_{X_j}(F_{n,k}(j)\, t/g_{n,k})\bigr|.
    \]
    By \cite[Lemma 1]{GilmerKopparty}, for $t \in \R$ we have
    \[
        \bigl|\phi_{X_j}(t)\bigr| \leq 1 - 8p(1-p)\lVert t/(2\pi)\rVert_{\R/\Z}^2
    \]
    and hence
    \begin{equation}
        \label{eq:char-fn-bound}
        \bigl|\phi_{S/g_{n,k}}(t)\bigr| \leq \exp\Biggl(-8p(1-p)\sum_{j=1}^{n-1}\Bigl\lVert\frac{F_{n,k}(j)t}{2\pi g_{n,k}}\Bigr\rVert_{\R/\Z}^2\Biggr).
    \end{equation}
    Fix $t \in [-\pi,\pi]$ and write $\theta \vcentcolon= t/(2\pi) \in [-\frac{1}{2},\frac{1}{2}]$. Let $c_k \in (0,\frac{1}{2})$ be a sufficiently small constant, depending only on $k$. Let $B \vcentcolon= \mathrm{Bohr}(\theta;c_k)$, where for $\rho > 0$ we define the rank-$1$ Bohr set
    \[
        \mathrm{Bohr}(\theta;\rho) \vcentcolon= \{x \in \Z : \lVert x\theta\rVert_{\R/\Z} \leq \rho\}.
    \]
    On considering the set
    \[
        A \vcentcolon= \{j \in [n-1] : F_{n,k}(j)/g_{n,k} \in B\},
    \]
    we obtain the bound
    \[
        \bigl|\phi_{S/g_{n,k}}(t)\bigr| \leq \exp\bigl(-8p(1-p) \,c_k^2\, \bigl|[n-1]\setminus A \bigr|\bigr).
    \]
    Consequently, we may assume that
    \begin{equation}
        \label{eq:A-lower-bound}
        |A| 
        \geq 
        (1-c_k)(n-1),
    \end{equation} 
    as otherwise we are done. 
    
    Let $\eta \vcentcolon= 0$ if $k$ is even and $\eta \vcentcolon= 1$ if $k$ is odd. By Lemma~\ref{l:even-vs-odd}, we have $\deg F_{n,k} = k-\eta$ and $g_{n,k} \asymp_k n^{\eta}$. Applying Proposition~\ref{p:iter-sumset} to $f = F_{n,k}/g_{n,k}$, we obtain a positive integer $q \ll_k 1$ such that
    \[
        \Bigl|\Bigl\{\sum_{i=1}^{q}F_{n,k}(j_i)/g_{n,k} : j_1,\ldots,j_q \in A\Bigr\}\Bigr| \gg_k n^{k-\eta}.
    \]
    Let $D$ be the set on the left-hand side in the preceding display. Then $\{0\} \subseteq D \subseteq [0,O_k(n^{k-\eta})]$, so we may apply a result of Nathanson and S\'ark\"ozy \cite{NathansonSarkozy} as stated in \cite[Theorem 3.1]{Green} to obtain that, for some $q' \ll_k 1$, the iterated sumset $q'D$ contains an arithmetic progression of length at least $n^{k-\eta}/q'$ and common difference $d \leq q'$. By passing to a subprogression, we may assume that $d$ is divisible by $k!$. 
    
    Observe the key property of $A$ that for any $\ell \in \N$, any $\lambda_1,\ldots,\lambda_\ell \in \Z$ and $j_1,\ldots,j_\ell \in A$ we have
    \[
        \sum_{i=1}^{\ell}\lambda_iF_{n,k}(j_i)/g_{n,k} \in \mathrm{Bohr}\Bigl(\theta;c_k\sum_{i=1}^{\ell}|\lambda_i|\Bigr).
    \]
    In particular, $q'D \subseteq \mathrm{Bohr}(\theta;qq'c_k)$. Now if $c_k$ is chosen so that $c_k < \frac{1}{k+1}$ (and $n$ is large), it follows from \eqref{eq:A-lower-bound} that there is a $j$ such that $j,j+1,\ldots,j+k \in A$. It then follows from Lemma~\ref{l:even-vs-odd} that $\mathrm{Bohr}(\theta;2^kk!c_k)$ contains $k!$. Hence, $\mathrm{Bohr}(\theta;(qq'+2^kd)c_k)$ contains
    \[
        q'D + \{k!\ell : 0 \leq \ell < d/k!\},
    \]
    which in turn contains an arithmetic progression of length at least $n^{k-\eta}/q'$ and common difference $k!$. In summary, there exist positive integers $L \gg_k n^{k-\eta}$ and $q'' \ll_k 1$ such that for all $\ell \in [L]$ we have
    \[
        \lVert k!\ell\theta\rVert_{\R/\Z} \leq q''c_k.
    \]
    Provided $c_k$ is small enough, we may apply Lemma~\ref{l:recurrence} with $\theta' \vcentcolon= k!\theta$ in place of $\theta$ to obtain an integer $m \in [-k!/2,k!/2]$ such that $|\theta'-m| \leq q''c_k/L$. We will show that in fact $m = 0$. To this end, note that if $c_k$ is sufficiently small, then
    \begin{equation}
        \label{eq:closeness}
        \Bigl|\theta F_{n,k}(j)/g_{n,k}-\frac{mF_{n,k}(j)}{k!g_{n,k}}\Bigr| \leq \frac{1}{4k!}
    \end{equation}
    for all $j \in [n-1]$, and also $\lVert \theta F_{n,k}(j)/g_{n,k}\rVert_{\R/\Z} \leq 1/(4k!)$ for all $j \in A$. Thus, by the triangle inequality, for $j \in A$ we have
    \[
        \Bigl\lVert\frac{mF_{n,k}(j)}{k!g_{n,k}}\Bigr\rVert_{\R/\Z} \leq \frac{1}{2k!},
    \]
    which forces $k!$ to divide $mF_{n,k}(j)/g_{n,k}$, i.e.\ $K \vcentcolon= k!/(k!,m)$ to divide $F_{n,k}(j)/g_{n,k}$. 
    
    Suppose that $K > 1$ and consider the polynomial $\widetilde{F}_{n,k} \vcentcolon= 2k!F_{n,k}/g_{n,k}$. By Lemma~\ref{l:even-vs-odd}, $\widetilde{F}_{n,k}$ has integer coefficients. Furthermore, all values of $\widetilde{F}_{n,k}$ on $A$ are divisible by $2k!K$. However, by definition of $g_{n,k}$, not all values of $\widetilde{F}_{n,k}$ on $\Z$ are divisible by $2k!K$. Thus, $A$ misses at least one residue class modulo $2k!K$, which contradicts~\eqref{eq:A-lower-bound} if $c_k$ is small enough. We conclude that $K = 1$, i.e.\ $k!$ divides $m$. Since $|m| \leq k!/2$, we must have $m = 0$. From~\eqref{eq:closeness}, we now obtain that $|\theta F_{n,k}(j)/g_{n,k}| \leq 1/(4k!)$ for all $j \in [n-1]$. Finally, the bound~\eqref{eq:char-fn-bound} becomes
    \[
        \bigl|\phi_{S/g_{n,k}}(t)\bigr| \leq \exp\Bigl(-8p(1-p)\sum_{j=1}^{n-1}(\theta F_{n,k}(j)/g_{n,k})^2\Bigr) = \exp\bigl(-2\sigma^2t^2/(\pi g_{n,k})^2\bigr),
    \]
    as desired.
\end{proof}


\section{Proof of Theorem \ref{t:density}}
\label{s:density}

Given integers $2 \leq k \leq n$, let
\[
    \cT_{k,n} \vcentcolon= \{\SW_k(T) : T \text{ is a tree on $n$ vertices}\},
\]
so that, according to~\eqref{eq:all-sw-for-k},
\[
    \cT_k = \bigcup_{n=k}^{\infty}\cT_{k,n}.
\]
Let $T$ be a tree on $n$ vertices. By \cite[Theorem 3.3]{LiMaoGutman}, we have
\[
    G_{n,k} \leq \SW_k(T) \leq H_{n,k},
\]
where $H_{n,k} \vcentcolon= (k-1)\binom{n+1}{k+1}$. Note that $G_{n,k} = \SW_k(S_n)$ and $H_{n,k} = \SW_k(P_n)$, where $S_n, P_n$ are the star and path on $n$ vertices, which corresponds to the caterpillars $C_{\emptyset}^n$ and $C_{[n-1]}^n$ respectively. Recalling the definition~\eqref{eq:gcd} and noting that $g_{n,k}$ divides $F_{n,k}(0) = -\binom{n-1}{k-1}$, we conclude from~\eqref{eq:sw-trees} that
\[
    \cT_{k,n} \subseteq [G_{n,k},H_{n,k}] \cap g_{n,k}\N =\vcentcolon I_{n,k}^{\mathrm{hi}}.
\]
On the other hand, Corollary~\ref{c:subset-sums} provides an approximate converse by telling us that
\[
    \cT_{k,n} \supset [G_{n,k} + C_kn^k\log n, H_{n,k}-C_kn^k\log n] \cap g_{n,k}\N =\vcentcolon I_{n,k}^{\mathrm{lo}},
\]
where $C_k > 0$ is a constant depending only on $k$. In particular, as $n$ grows, asymptotically almost all multiples of $g_{n,k}$ from $[G_{n,k},H_{n,k}]$ are the Steiner--Wiener $k$ index of an $n$-vertex tree.

If $k$ is even, the above information is more than enough to deduce that $\cT_k$ is cofinite,
thus giving a new proof of the corresponding direction of Theorem \ref{t:cofinite}.
Indeed, for $n$ even, it follows from the differencing identity~\eqref{eq:fin-dif-switch-val-3} with $h_1 = \ldots = h_{k-1}=1$ that $g_{n,k}$ is odd, whence $g_{n,k} = 1$ by Lemma~\ref{l:even-vs-odd}. Thus, $I_{n,k}^{\mathrm{lo}}$ is a discrete interval, and a simple calculation reveals that for $n$ large enough, the intervals $I_{n,k}^{\mathrm{lo}}$ and $I_{n+2,k}^{\mathrm{lo}}$ overlap. Therefore, the cofiniteness of $\cT_k$ follows.

If $k$ is odd, then by Lemma~\ref{l:even-vs-odd} we have $g_{n,k} \asymp_k n$, which makes matters more complicated. Nevertheless, we have the inclusions
\[
    \cTlo_k \subseteq \cT_k \subseteq \cThi_k,
\]
where we define
\[
    \cTlo_k \vcentcolon= 
    \bigcup_{n=k}^{\infty}I_{n,k}^{\mathrm{lo}}, \qquad
    \cThi_k 
    \vcentcolon= 
    \bigcup_{n=k}^{\infty}I_{n,k}^{\mathrm{hi}}.
\]
We will show that this actually determines $\cT_k$ up to a set of density zero.
To this end, our main tool will be the following special case of Ford's result on integers with a divisor in a given interval \cite[Theorem 1]{Ford}.
For $1\leq y<z\leq x$, 
let $H(x,y,z)$ denote the number of positive integers at most $x$ having a divisor in $(y,z]$. 
If $100\leq y\leq\sqrt{x}$, $2y\leq z\leq y^2$, and $z=y^{1+u}$, then
\begin{equation}
    \label{eq:ford-basic}
    H(x,y,z)
    \asymp
    xu^\delta\log(2/u)^{-3/2},
\end{equation}
where $\delta$ is the Erd\H{o}s--Tenenbaum--Ford constant as defined in \eqref{eq:ETFconstant}.
We will mainly use this estimate through the following lemma, which states that integers with a divisor on a fixed power-scale occupy a set of density zero. For technical reasons, we also allow polylogarithmic factors in the endpoints of the interval.

\begin{lemma}
    \label{l:dyadic-int}
    Let $\gamma \in (0,\frac{1}{2})$, $\kappa_1, \kappa_2 \in \R$ and $\lambda_1, \lambda_2 > 0$ be such that $\kappa_1 \leq \kappa_2$ and $\lambda_1 < \lambda_2/2$. Then
    \[
        d\Bigl(\bigl\{N \in \N : N \text{ has a divisor in } \bigl(\lambda_1N^{\gamma}(\log_2 N)^{\kappa_1},\lambda_2 N^{\gamma}(\log_2 N)^{\kappa_2}\bigr]\bigr\}\Bigr) = 0.
    \]
\end{lemma}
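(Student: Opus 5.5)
To show the density is zero, I will bound the upper density by covering the set dyadically in $N$ and then applying Ford's estimate~\eqref{eq:ford-basic}. The key feature is that the relevant divisor interval has multiplicative width $\lambda_2/\lambda_1\cdot(\log_2 N)^{\kappa_2-\kappa_1}$, which is only polylogarithmic in $N$. This corresponds to a parameter $u$ tending to $0$ in Ford's bound, and then $u^\delta\log(2/u)^{-3/2}\to0$.

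Concretely, fix a large $x$ and consider $N\in(x/2,x]$. For such $N$, the interval $\bigl(\lambda_1N^{\gamma}(\log_2 N)^{\kappa_1},\lambda_2N^{\gamma}(\log_2 N)^{\kappa_2}\bigr]$ is contained in the $N$-independent interval $(y,z]$. Here I take
\[
y=\lambda_1(x/2)^{\gamma}(\log_2 x)^{\kappa_1}\cdot c, \qquad z=\lambda_2x^{\gamma}(\log_2 x)^{\kappa_2}\cdot c^{-1},
\]
with $c\in(0,1]$ a harmless constant. This constant absorbs the monotonicity issues caused by the possibly negative exponents $\kappa_i$, and it uses the fact that $\log_2 N\asymp\log_2 x$ on this range. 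Hence the number of bad $N\in(x/2,x]$ is at most $H(x,y,z)$.

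The hypotheses of~\eqref{eq:ford-basic} need checking. For large $x$ we have $100\le y\le\sqrt x$, since $\gamma<1/2$. If $z<2y$ I simply enlarge $z$ to $2y$; this only increases $H$ and leaves $u$ small. Clearly $z\le y^2$. Writing $z=y^{1+u}$, we get
\[
u=\frac{\log(z/y)}{\log y}\ll_{\gamma,\lambda_i,\kappa_i}\frac{\log\log x}{\log x}\to0.
\]
Ford's estimate then gives
\[
H(x,y,z)\ll x\,u^{\delta}\log(2/u)^{-3/2}=o(x).
\]
(The hypothesis $\lambda_1<\lambda_2/2$ is there to ensure the interval is non-degenerate at scale $2y$; it is not essential to the argument.)

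Finally I sum over dyadic blocks. The count of bad $N\le X$ is at most
\[
\sum_{j\ge0} H\bigl(X2^{-j},y_j,z_j\bigr)+O(X^{1/2})=o(X).
\]
To see this, split the sum. For blocks with $X2^{-j}\ge X^{1/2}$, each term is $o(X2^{-j})$ uniformly, since $u_j\ll\log\log X/\log X$ there. The smaller blocks contribute trivially $O(X^{1/2})$. This shows the upper density is $0$, so the density exists and equals $0$. The only mild obstacle is bookkeeping: making $(y,z]$ uniform over each dyadic block and verifying Ford's range conditions. No genuinely hard step is expected.
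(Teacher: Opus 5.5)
Your proposal is correct and follows essentially the same route as the paper's proof. Both arguments decompose into dyadic blocks, bound each block by a single count $H(x,y,z)$ with $u=\log(z/y)/\log y\to 0$, and sum using Ford's estimate~\eqref{eq:ford-basic}. The differences are cosmetic: you enlarge $z$ to $2y$ where the paper uses $\lambda_1<\lambda_2/2$ to get $z\geq 2y$, and you split the dyadic sum at $X^{1/2}$ with uniform smallness of $u$ where the paper uses an $\eps$--$J$ argument.
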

\begin{proof}
    Throughout the proof, we allow implicit constants to depend on $\gamma,\kappa_1,\kappa_2,\lambda_1,\lambda_2$. Denote the set in question by $E$. By dyadic summation, for sufficiently large $M \in \N$ we have
    \[
        |E \cap [M]| 
        \leq C +  
        \sum_{J_0 \leq j < \log_2M} H\bigl(2^{j+1},2^{\gamma j}j^{\kappa_1}\lambda_1,2^{\gamma(j+1)}(j+1)^{\kappa_2}\lambda_2\bigr),
    \]
    where $C, J_0 \ll 1$. By~\eqref{eq:ford-basic} and a short calculation, the summand has order of magnitude
    \[
        \Theta\Bigl(2^j\bigl((\kappa_2-\kappa_1)\log j+1\bigr)^{\delta}j^{-\delta}(\log j)^{-3/2}\Bigr).
    \]
    Hence, given any $\eps > 0$, choosing $J \in \N$ so that this is at most $2^j\eps$ for $j \geq J$, we obtain
    \[
        |E \cap [M]| \ll \sum_{J_0 \leq j < J}2^j + \eps\sum_{J \leq j < \log_2M}2^j \ll 2^J + \eps M.
    \]
    Therefore, we have
    \[
        \limsup_{M\to\infty}\frac{|E\cap [M]|}{M} \leq \eps,
    \]
    so the claim follows since $\eps > 0$ is arbitrary.
\end{proof}

\begin{remark}
    \label{r:density}
    In the literature, one often sees the function $h$ being defined as
    \[
        h(\alpha,\beta) = \lim_{x\to\infty}\frac{H(x,x^\alpha,x^\beta)}{x}.
    \]
    Using Lemma~\ref{l:dyadic-int}, one can show that this is equivalent to our definition~\eqref{eq:localised-density} in the regime of interest, namely for $\beta < \frac{1}{2}$. Indeed, it suffices to show that, for large $x$,
    \[
        \bigl|\bigl\{n \leq x : n \text{ has a divisor in } (x^{\alpha}, x^{\beta}]\bigr\} \triangle \bigl\{n\leq x : n \text{ has a divisor in } (n^{\alpha}, n^{\beta}]\bigr\}\bigr| = o(x).
    \]
    We may restrict attention to values of $n$ from the interval $(x/\log x,x]$ say. Now if such an $n$ has a divisor in $(x^{\alpha},x^{\beta}]$, but does not have one in $(n^{\alpha},n^{\beta}]$, then it must have a divisor in $(n^{\beta},10(n\log n)^{\beta}]$. However, by Lemma~\ref{l:dyadic-int}, such numbers occupy a set of density zero. Similarly, if $n$ has a divisor in $(n^{\alpha},n^{\beta}]$, but not in $(x^{\alpha},x^{\beta}]$, then it has a divisor in $(n^{\alpha},10(n\log n)^{\alpha}]$. Again by Lemma~\ref{l:dyadic-int}, the set of all such $n$ has density $0$, so the desired equivalence follows.
\end{remark}

We are now in a position to relate the sets $\cTlo_k$, $\cThi_k$ and $\cT_k$.

\begin{proposition}
    \label{p:density-zero}
    Let $k \geq 3$ be odd. Then $d(\cThi_k\setminus\cTlo_k) = 0$, so in particular 
    \[
        d(\cT_k \triangle \cThi_k) = 0.
    \]
\end{proposition}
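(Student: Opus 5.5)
Since $\cThi_k\setminus\cTlo_k\subseteq\bigcup_n (I^{\mathrm{hi}}_{n,k}\setminus I^{\mathrm{lo}}_{n,k})$, it suffices to show that the union of the boundary layers
\[
    B_n \vcentcolon= \bigl([G_{n,k},G_{n,k}+C_kn^k\log n)\cup(H_{n,k}-C_kn^k\log n,H_{n,k}]\bigr)\cap g_{n,k}\N
\]
has density zero. The second claim then follows from $\cTlo_k\subseteq\cT_k\subseteq\cThi_k$, since $\cT_k\triangle\cThi_k=\cThi_k\setminus\cT_k\subseteq\cThi_k\setminus\cTlo_k$.

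I would treat the two layers separately and split each element $N\in B_n$ as $N=g_{n,k}\cdot(N/g_{n,k})$. By Lemma~\ref{l:even-vs-odd}, $g_{n,k}$ divides $n-k+1$ and is a multiple of $(n-k+1)/\gcd(n-k+1,k!)$. Hence $g_{n,k}\asymp_k n$, with the ratio taking one of $O_k(1)$ values.

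\textbf{Upper layer.} Here $N\asymp_k n^{k+1}$, namely $N=H_{n,k}+O(n^k\log n)$ with $H_{n,k}\sim\frac{k-1}{(k+1)!}n^{k+1}$. Thus $n=c_kN^{1/(k+1)}(1+O(\log N/N^{1/(k+1)}))$ is determined up to an additive error $O(\log N)$, and $N$ has the divisor $g_{n,k}$ lying in an interval of the form $(\lambda_1N^{1/(k+1)},\lambda_2N^{1/(k+1)}]$. To make $\lambda_1<\lambda_2/2$ hold, I would partition according to the $O_k(1)$ possible values of the ratio $(n-k+1)/g_{n,k}=r$. For fixed $r$ the divisor $(n-k+1)/r$ lies in a window of multiplicative width $1+o(1)$ around $c_kN^{1/(k+1)}/r$, so a suitable short window with $\lambda_1<\lambda_2/2$ works.

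Lemma~\ref{l:dyadic-int} with $\gamma=1/(k+1)<1/2$ then gives density zero. This bound is crude: it counts all multiples, not just those in the layer. Still, a density-zero superset is enough, provided the count is organised over $N$ rather than over $n$.

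\textbf{Lower layer.} Here $G_{n,k}\sim n^k/(k-1)!$, so $N\asymp n^k$ and $n\asymp N^{1/k}$. The layer has width $C_kn^k\log n$, which is comparable to $G_{n,k}$ times $\log n$. This is not a relatively thin layer, so $n$ is determined only up to a factor $(\log N)^{O(1/k)}$. The value $N\in[G_{n,k},G_{n,k}+C_kn^k\log n]$ forces $n^k\ll N\ll n^k\log n$, i.e. $n\in[\lambda_1(N/\log N)^{1/k},\lambda_2N^{1/k}]$ after adjusting constants. This is precisely why Lemma~\ref{l:dyadic-int} allows polylogarithmic factors. Then $N$ has a divisor $g_{n,k}\asymp n$ in $(\lambda_1N^{1/k}(\log_2N)^{-1/k},\lambda_2N^{1/k}]$, with $\gamma=1/k<1/2$ because $k\ge3$, and the constants can be shrunk or enlarged to ensure $\lambda_1<\lambda_2/2$. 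The lemma again yields density zero.

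Since each $N\in\cThi_k\setminus\cTlo_k$ lies in one of finitely many such density-zero sets, the proposition follows.

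The main obstacle I expect is the bookkeeping in the lower layer: checking that the polylogarithmic widening of the interval is exactly what Lemma~\ref{l:dyadic-int} tolerates. One also has to make sure no $N$ escapes through small $n$, but finitely many $n$ contribute only finitely many $N$ in bounded ranges, which is harmless.
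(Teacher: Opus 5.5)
Your proposal is correct and matches the paper's proof: you split into the lower and upper boundary layers and apply Lemma~\ref{l:dyadic-int} to the divisor $g_{n,k}\asymp_k n$, with $\gamma=1/k$, $\kappa_1=-1/k$, $\kappa_2=0$ for the lower layer and $\gamma=1/(k+1)$, $\kappa_1=\kappa_2=0$ for the upper layer. Partitioning the upper layer by the ratio $(n-k+1)/g_{n,k}$ is unnecessary, because the hypothesis $\lambda_1<\lambda_2/2$ asks for a \emph{wide} window and you may simply enlarge the window, as you already do in the lower layer.
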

\begin{proof}
    Observe that
    \[
        \cThi_k\setminus\cTlo_k
        \subset
        U\cup V,
    \]
    where
    \begin{align*}
        U
        &\vcentcolon=
        \bigcup_{n=k}^{\infty}
        \bigl([G_{n,k},G_{n,k}+C_kn^k\log n)\cap g_{n,k}\N\bigr),\\
        V
        &\vcentcolon=
        \bigcup_{n=k}^{\infty}
        \bigl((H_{n,k}-C_kn^k\log n,H_{n,k}]\cap g_{n,k}\N\bigr).
    \end{align*}
    It thus suffices to show that
    \begin{equation}
        \label{eq:density-zero}
        d(U) = 0 = d(V).
    \end{equation}
    For the first equality, note that any $N \in U$ must have a divisor $d$ (corresponding to $g_{n,k}$) such that $d^k \ll_k N \ll_k d^k\log d$, that is, $(N/\log N)^{1/k} \ll_k d \ll_k N^{1/k}$. Hence, the conclusion follows from Lemma~\ref{l:dyadic-int} applied with $\gamma = 1/k$, $\kappa_1 = -1/k$, $\kappa_2 = 0$ and some constants $\lambda_1,\lambda_2$ depending only on $k$.

    The proof of the second equality in~\eqref{eq:density-zero} is similar. 
    Any $N \in V$ must have a divisor $d$ such that $d^{k+1} \asymp_k N$, i.e.\ $d \asymp_k N^{1/(k+1)}$. Hence, we are again done by Lemma~\ref{l:dyadic-int}, now applied with $\gamma = 1/(k+1)$ and $\kappa_1 = \kappa_2 = 0$.
\end{proof}

By Proposition~\ref{p:density-zero}, it suffices
to study the set $\cThi_k$, which admits a fairly clean description in terms of divisors.
We will compare it with the even simpler localised-divisor set
\[
    \cD_k 
    \vcentcolon=
    \bigl\{N \in \N : \text{$N$ has a divisor in }(N^{1/(k+1)}, N^{1/k}]\bigr\}.
\]
To carry out this comparison, we will need finer control on $g_{n,k}$. This is provided by the following lemma, whose proof is deferred to Appendix~\ref{s:gcd}.

\begin{lemma}
    \label{l:gcd}
    For any odd $k \geq 3$ and any $n \geq k$, we have
    \[
        g_{n,k} = \frac{n-k+1}{\gcd(n-k+1,\lcm(1,\ldots,k-1))}.
    \]
\end{lemma}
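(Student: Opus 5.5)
The plan is to reduce the computation of $g_{n,k}$ to the gcd of a few binomial coefficients in $m \vcentcolon= n-k+1$, and then evaluate that gcd $p$-adically.

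\textbf{Step 1: expand $F_{n,k}$ around $m$.} Write $n-x = m + (k-1-x)$ and $n-1 = m+(k-2)$, and apply the Vandermonde identity $\binom{a+b}{k}=\sum_{j}\binom{a}{j}\binom{b}{k-j}$ to both $\binom{n-x}{k}$ and $\binom{n-1}{k}$. The $j=0$ term of $\binom{n-x}{k}$ is $\binom{k-1-x}{k}$. Since $k$ is odd, this equals $-\binom{x}{k}$ by the same reflection used in the proof of Lemma~\ref{l:even-vs-odd}, so it cancels the term $-\binom{x}{k}$ in $F_{n,k}$. The $j=0$ term of $\binom{n-1}{k}$ is $\binom{k-2}{k}=0$. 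We expect to arrive at
\[
    F_{n,k}(x)=\sum_{j=1}^{k}\binom{m}{j}P_j(x),
    \qquad
    P_j(x)\vcentcolon=\binom{k-2}{k-j}-\binom{k-1-x}{k-j}.
\]
Here $P_k\equiv 0$. Each $P_j$ is integer-valued with $P_j(1)=0$. For $1\le j\le k-1$, $P_j$ has degree exactly $k-j$ and leading coefficient $\pm 1/(k-j)!$.

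\textbf{Step 2: lattice argument.} Because of the unit leading coefficients in the binomial basis, $P_1,\dots,P_{k-1}$ form a $\Z$-basis of the integer-valued polynomials of degree at most $k-1$ vanishing at $x=1$. That module has the $\Z$-basis $\binom{x-1}{i}$, $1\le i\le k-1$, and the change of basis is triangular with diagonal entries $\pm1$. The gcd of all values of an integer-valued polynomial equals the gcd of its coordinates in the basis $\binom{x-1}{i}$, $0 \le i \le k-1$; here the constant coordinate is $0$. It follows that
\[
    g_{n,k}=\gcd\Bigl(\binom{m}{1},\dots,\binom{m}{k-1}\Bigr).
\]

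\textbf{Step 3: the $p$-adic computation.} It remains to show that $\gcd_{1\le j\le r}\binom{m}{j}=m/\gcd(m,\lcm(1,\dots,r))$ with $r=k-1$. We argue prime by prime. Let $e=v_p(m)$ and $s=\lfloor\log_p r\rfloor = v_p(\lcm(1,\dots,r))$.
\begin{itemize}
    \item Lower bound: from $\binom{m}{j}=\frac{m}{j}\binom{m-1}{j-1}$ we get $v_p\binom{m}{j}\ge e-v_p(j)\ge e-s$ for every $1 \le j \le r$.
    \item Upper bound: take $j=p^a$ with $a=\min(e,s)$. Kummer's theorem (counting carries when adding $p^a$ and $m-p^a$) gives $v_p\binom{m}{p^a}=e-a$.
\end{itemize}
Hence $v_p(g_{n,k})=\max(e-s,0)=v_p\bigl(m/\gcd(m,\lcm(1,\dots,k-1))\bigr)$, as required.

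\textbf{Expected obstacle.} The step most likely to hide sign or index slips is the Vandermonde bookkeeping in Step 1, in particular the odd-$k$ cancellation of the $\binom{x}{k}$ term and checking that $P_j$ has the claimed degree and leading coefficient. I would verify the identity numerically for small $k$, say $k=3,5$, before relying on it. The triangularity argument in Step 2 and the Kummer computation in Step 3 are routine.
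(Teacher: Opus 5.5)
Your proof is correct and follows essentially the same route as the paper. Both compute $g_{n,k}$ as the content of $F_{n,k}$ in a binomial basis and then evaluate the resulting gcd of binomial coefficients prime by prime. The difference is bookkeeping: the paper expands directly in $\binom{x}{j}$ and reads off the coefficients $\binom{n-j}{k-j}=\binom{m+i-1}{i}$ with no change of basis, whereas your Vandermonde expansion in $m$ gives the family $\binom{m}{j}$ and needs your extra unimodular triangularity step.
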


We can now establish a relationship between the sets $\cT_k$ and $\cD_k$.

\begin{proposition}
    \label{p:TkDk}
    Let $k\geq 3$ be odd. Then
    \[
        d(\cT_k\triangle \cD_k)=0.
    \]
\end{proposition}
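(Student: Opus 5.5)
By Proposition~\ref{p:density-zero} it suffices to show $d(\cThi_k\triangle\cD_k)=0$. We establish the two inclusions $\cThi_k\setminus\cD_k\subseteq E_1$ and $\cD_k\setminus\cThi_k\subseteq E_2$, where $E_1,E_2$ are sets of density zero coming from Lemma~\ref{l:dyadic-int}. Throughout we write $m\vcentcolon=n-k+1$ and $L\vcentcolon=\lcm(1,\ldots,k-1)$. Lemma~\ref{l:gcd} gives $g_{n,k}=m/\gcd(m,L)$, so $g_{n,k}\asymp_k n$. The relevant sizes are $G_{n,k}=(n-1)\binom{n-1}{k-1}\sim n^k/(k-1)!$ and $H_{n,k}=(k-1)\binom{n+1}{k+1}\sim (k-1)n^{k+1}/(k+1)!$.

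First, let $N\in\cThi_k$. Then $N\in[G_{n,k},H_{n,k}]$ and $d\vcentcolon=g_{n,k}\mid N$. From $N\asymp_k n^{\cdot}$ bounds we get $c_1 N^{1/(k+1)}\le d\le c_2N^{1/k}$ for constants depending on $k$. The constants themselves are harmless, but we need the divisor to lie exactly in $(N^{1/(k+1)},N^{1/k}]$. If $d$ falls outside this window, then $d$ lies in $(c_1N^{1/(k+1)},N^{1/(k+1)}]$ or in $(N^{1/k},c_2N^{1/k}]$. Covering each of these ranges by $O_k(1)$ subintervals of the form $(\lambda N^\gamma,2.1\lambda N^\gamma]$ and applying Lemma~\ref{l:dyadic-int} with $\kappa_1=\kappa_2=0$ shows that such $N$ form a set of density zero. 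Hence $\cThi_k\setminus\cD_k$ has density zero.

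Conversely, let $N\in\cD_k$ with a divisor $e\in(N^{1/(k+1)},N^{1/k}]$. We must produce $n$ with $g_{n,k}\mid N$ and $N\in[G_{n,k},H_{n,k}]$. The natural choice is $n=e+k-1$, so $m=e$ and $g_{n,k}=e/\gcd(e,L)$ divides $e$, hence divides $N$. It remains to check that $N\in[G_{n,k},H_{n,k}]$. We have $G_{n,k}\sim e^k/(k-1)!$, which is at most $N$ once $e\le N^{1/k}$ and $N$ is large, up to lower-order corrections. Similarly $H_{n,k}\sim\frac{k-1}{(k+1)!}e^{k+1}$ should be at least $N$ when $e>N^{1/(k+1)}$. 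This is where the main obstacle lies: the constant $\frac{k-1}{(k+1)!}$ is less than $1$, so $H_{n,k}\ge N$ fails when $e$ is only slightly above $N^{1/(k+1)}$. The fix is to let $n$ be free rather than tied to $e$: choose $n$ with $e\mid n-k+1$, i.e.\ $n-k+1=te$ for a bounded integer $t\ge1$. Then $g_{n,k}\mid te/\gcd(te,L)$, and this divides $N$ provided $t\mid L$-type cancellation works. The cleanest route is $t\mid L$, since then $g_{n,k}=te/\gcd(te,L)$ divides $e\cdot(t/\gcd(t,L))\cdot\ldots$; we need to check this carefully and may instead simply restrict to $t$ for which it holds. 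Failing this, we only need the statement up to density zero. Any $N\in\cD_k$ whose divisor $e$ lies in $(N^{1/(k+1)},C_kN^{1/(k+1)}]$ or in $(c_kN^{1/k},N^{1/k}]$ is discarded, because these form a density-zero set by Lemma~\ref{l:dyadic-int}, exactly as before. For the remaining $N$, the divisor $e$ is at least a large constant multiple of $N^{1/(k+1)}$ and at most a small constant multiple of $N^{1/k}$, so $n=e+k-1$ satisfies $G_{n,k}\le N\le H_{n,k}$ for large $N$, and therefore $N\in I^{\mathrm{hi}}_{n,k}\subseteq\cThi_k$.

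Combining both directions with Proposition~\ref{p:density-zero} gives $d(\cT_k\triangle\cD_k)=0$. We expect the boundary-constant issue to be the only delicate point, and it is absorbed by Lemma~\ref{l:dyadic-int}, whose hypothesis $\lambda_1<\lambda_2/2$ is met by splitting each constant-factor window into finitely many pieces.
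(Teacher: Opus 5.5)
Your proof is correct and has the same skeleton as the paper's. You reduce to $\cThi_k$ via Proposition~\ref{p:density-zero}, note that in each direction the relevant divisor lies in $(N^{1/(k+1)},N^{1/k}]$ up to constant factors, and discard the constant-factor boundary windows with Lemma~\ref{l:dyadic-int}. Your first inclusion is exactly the paper's.

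The real difference is in the converse direction. The paper takes $n=\lcm(1,\ldots,k-1)\,e+k-1$, so that Lemma~\ref{l:gcd} gives $g_{n,k}=e$ exactly. You take $n=e+k-1$ and use only $g_{n,k}\mid e$. Membership in $I^{\mathrm{hi}}_{n,k}$ needs only $g_{n,k}\mid N$, so this suffices. It also shows the proposition does not need the exact formula of Lemma~\ref{l:gcd}. The two facts you actually use, $g_{n,k}\mid n-k+1$ and $g_{n,k}\ge (n-k+1)/k!$, are already in Lemma~\ref{l:even-vs-odd}; you cite Lemma~\ref{l:gcd}, but only for these consequences.

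The two choices of $n$ fail at opposite ends. With yours, $G_{n,k}\le N$ is automatic for large $N$ because $(k-1)!\ge 2$, so only divisors near $N^{1/(k+1)}$ cause trouble. With the paper's, $H_{n,k}\ge N$ is automatic because $(k-1)L^{k+1}\ge (k+1)!$, so only divisors near $N^{1/k}$ cause trouble.

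Delete your unfinished paragraph on $n-k+1=te$; nothing depends on it. It is worth knowing, though, that it can be completed. For $t\mid L$ one has $t\mid\gcd(te,L)$, so $g_{n,k}=te/\gcd(te,L)$ divides $e$. Then take $t=1$ when $e\ge C_kN^{1/(k+1)}$ and $t=L$ otherwise. This shows that $\cD_k\setminus\cThi_k$ is in fact finite, so Lemma~\ref{l:dyadic-int} is only needed for the inclusion $\cThi_k\setminus\cD_k$.
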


\begin{proof}
    By Proposition~\ref{p:density-zero}, it suffices to prove the same statement with $\cT_k$ replaced by $\cThi_k$. In what follows, we will denote by $\alpha_k,\beta_k > 0$ a sufficiently small and sufficiently large constant depending only on $k$, respectively; their values may vary from line to line.
    
    Note that any $N \in \cThi_k$ has a divisor $d$ with $d^k \ll_k N \ll_k d^{k+1}$, i.e.\ $N^{1/(k+1)} \ll_k d \ll_k N^{1/k}$. 
    Hence, if $N \not\in \cD_k$, then $N$ has a divisor in either $(\alpha_kN^{1/(k+1)}, N^{1/(k+1)}]$ or $(N^{1/k}, \beta_kN^{1/k}]$. By Lemma \ref{l:dyadic-int}, the density of the set of all $N$ satisfying either of these conditions is $0$, whence 
    \[
        d(\cThi_k \setminus \cD_k) = 0.
    \]

    Conversely, if $N \in \cD_k\setminus \cThi_k$, then $N$ has a divisor $d \in (N^{1/(k+1)}, N^{1/k}]$. Letting
    \[
        n \vcentcolon= \lcm(1,\ldots,k-1)d+k-1,
    \]
    Lemma~\ref{l:gcd} implies that $g_{n,k} = d$. Since $N \not\in I_{n,k}^{\mathrm{hi}}$, we must have either $N < G_{n,k}$ or $N > H_{n,k}$. In particular, either $d \gg_k N^{1/k}$ or $d \ll_k N^{1/(k+1)}$. Thus, $N$ has a divisor in either $(\alpha_kN^{1/k}, N^{1/k}]$ or $(N^{1/(k+1)}, \beta_kN^{1/(k+1)}]$. Again by Lemma~\ref{l:dyadic-int}, the set of all such $N$ has density zero, so it follows that
    \[
        d(\cD_k\setminus \cThi_k) = 0,
    \]
    as desired.
\end{proof}

At last, we can give the short deduction of Theorem~\ref{t:density}.

\begin{proof}[Proof of Theorem~\ref{t:density}]
    By Proposition~\ref{p:TkDk}, the sets $\cT_k$ and $\cD_k$ differ by a set of density zero, so in particular $d(\cT_k)$ exists and equals $d(\cD_k) = h(\frac{1}{k+1},\frac{1}{k})$. That $h(\frac{1}{k+1},\frac{1}{k})$ has order of magnitude $\Theta(k^{-\delta}(\log k)^{-3/2})$ follows from~\eqref{eq:ford-basic} together with a theorem of Haddad \cite[Theorem 1]{Haddad} (alternatively, one can use a short dyadic argument similar to that in the proof of Lemma~\ref{l:dyadic-int}).
\end{proof}


\section{Concluding remarks and open problems}
\label{s:further}

In Theorem~\ref{t:cofinite} we have shown that, for even $k$, every sufficiently large integer $N$ is the Steiner--Wiener $k$-index of a tree. Letting
\[
    e(k) \vcentcolon= |\N\setminus\cT_k|, \qquad N_0(k) \vcentcolon= \max(\N\setminus\cT_k)
\]
be the number of exceptions and maximum exception respectively, the results of \cite{Wagner, WangYu} show that $e(2) = 49$ and $N_0(2) = 159$. We pose the following problem.

\begin{problem}
    \label{prob:exceptions}
    Estimate $e(k)$ and $N_0(k)$ for even $k \geq 4$.
\end{problem}

Our arguments are of an effective nature and one could extract from them a bound on $e(k)$ and $N_0(k)$. However, we have made no attempt to optimise this aspect of our arguments, so the resulting bounds are likely to be poor.

For odd $k$, Theorem~\ref{t:density} identifies the density of $\cT_k$ with the classical localised-divisor density $h(\frac{1}{k+1},\frac{1}{k})$.
Although Theorem~\ref{t:density} gives the order of magnitude for this density, it falls short of giving more precise estimates.
This leads to the following problem.

\begin{problem}
    \label{prob:asymptotic}
    Find more precise estimates for the localised-divisor density $h(\frac{1}{k+1},\frac{1}{k})$, and in particular an asymptotic equivalent as $k \to \infty$.
\end{problem}

From a computational perspective, the very recent work of Drappeau and Mounier \cite{drappeau2026computingsieveintegralsusing} is directly relevant to this problem.
They give effective finite-integral formulae for $h(\alpha,\beta)$, together with rigorous numerical methods based on integration over polytopes. In principle, their methods apply to $h(\frac{1}{k+1},\frac{1}{k})$ and hence to the density of $\cT_k$.
However, the present family lies in a thin localised-divisor regime: the gap $\frac1k-\frac1{k+1}$ shrinks quadratically with $k$.
This is precisely the direction in which the complexity of the finite-integral formulae grows quickly. 
Thus the family arising here provides a natural test case for extending effective computations of localised-divisor densities closer to the diagonal $\alpha=\beta$.

In a slightly different direction, there are well-studied analogies between the prime factorisation of a typical integer and the cycle structure of random permutations (see \cite{FordAnatomy, GranvilleAnatomy}). From this perspective, a natural model for Problem~\ref{prob:asymptotic} might be to investigate the limiting probability (as $n \to \infty$) that a random permutation from $S_n$ has an invariant set of size between $n/(k+1)$ and $n/k$. In the regime when the size of the invariant set is fixed and equal to $k$, an asymptotic for this quantity has very recently been established by Green and Sawhney \cite{GreenSawhneyPermutations}. This work improves on the results of Eberhard, Ford and Green \cite{EberhardFordGreen}, which established an order of magnitude of $k^{-\delta}(\log k)^{-3/2}$ for this quantity and was in turn based on arguments of Ford \cite{Ford}. In view of these connections and in particular the similar numerology, the methods of  \cite{GreenSawhneyPermutations} and the forthcoming work \cite{GreenSawhneyMultTable} might be relevant to the problem at hand.

{\small
\subsection*{Acknowledgements and AI tool disclosure}
The first author would like to thank Christian Bernert for useful discussions.

The authors are supported by the Croatian Science Foundation under the project no.\ HRZZ-IP-2022-10-5116 (FANAP) and by the European Union – NextGenerationEU through the National Recovery and Resilience Plan 2021-2026 Institutional grant of University of Zagreb Faculty of Science (IK IA 1.1.3.\ Impact4Math).

The statement and proof of Lemma~\ref{l:gcd} were suggested by ChatGPT Plus 5.5. 
In addition, the same tool was used to assist with 
routine typesetting,
grammar checking,
phrasing,
identifying technical corrections such as tracking constants, 
and locating many of the references. 
Apart from these uses, the text of this paper was human-written.
}

\appendix

\section{Proof of Lemma~\ref{l:gcd}}
\label{s:gcd}

In this section we prove Lemma~\ref{l:gcd}, whose statement we repeat here for convenience.

\begin{lemma}
    \label{l:gcd-app}
    For any odd $k \geq 3$ and any $n \geq k$, we have
    \[
        g_{n,k} = \frac{n-k+1}{\gcd(n-k+1,\lcm(1,\ldots,k-1))}.
    \]
\end{lemma}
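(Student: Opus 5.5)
The plan is to rewrite $F_{n,k}$ in a basis of integer-valued polynomials, so that $g_{n,k}$ becomes the gcd of explicit binomial coefficients in $m \vcentcolon= n-k+1$, and then to compute that gcd one prime at a time.

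\emph{Step 1: expansion.} Put $y \vcentcolon= k-1-x$, so that $n-x = m+y$. By the Vandermonde convolution,
\[
    \binom{n-x}{k} = \sum_{j=0}^{k}\binom{m}{j}\binom{y}{k-j}.
\]
The $j=0$ term is $\binom{k-1-x}{k}$. Since $k$ is odd, this equals $(-1)^k\binom{x}{k} = -\binom{x}{k}$; this is the same reflection used in the proof of Lemma~\ref{l:even-vs-odd}. It therefore cancels the $-\binom{x}{k}$ in $F_{n,k}$, and we get
\[
    F_{n,k}(x) = \Bigl(\binom{n-1}{k}-\binom{m}{k}\Bigr) - \sum_{j=1}^{k-1}\binom{m}{j}\binom{k-1-x}{k-j}.
\]

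\emph{Step 2: reduce to a gcd of binomial coefficients.} As $x$ ranges over $\Z$, the polynomials $\binom{k-1-x}{i}$ for $0\le i\le k-1$ form a $\Z$-basis of the integer-valued polynomials of degree at most $k-1$. Indeed, they are the standard binomial basis after the substitution $x\mapsto k-1-x$, which is a bijection of $\Z$. Hence $g_{n,k}$ is the gcd of the coefficients in this basis. The constant coefficient is
\[
    \binom{m+k-2}{k}-\binom{m}{k} = \sum_{j=0}^{k-1}\binom{m}{j}\binom{k-2}{k-j},
\]
again by Vandermonde. In this sum the $j=0$ term vanishes, so the constant coefficient is a $\Z$-combination of $\binom{m}{1},\dots,\binom{m}{k-1}$. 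Therefore
\[
    g_{n,k} = \gcd\Bigl(\binom{m}{1},\dots,\binom{m}{k-1}\Bigr).
\]

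\emph{Step 3: a classical gcd.} It remains to show that this gcd equals $m/\gcd(m,\lcm(1,\dots,k-1))$. We argue $p$-adically. Fix a prime $p$ and let $e_p$ be the largest integer with $p^{e_p}\le k-1$, so that $v_p(\lcm(1,\dots,k-1)) = e_p$.

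For the lower bound, the identity $\binom{m}{j}=\frac{m}{j}\binom{m-1}{j-1}$ gives
\[
    v_p\binom{m}{j} \ge v_p(m)-v_p(j) \ge v_p(m)-e_p \quad (1\le j\le k-1).
\]
Since valuations are nonnegative, $v_p\binom{m}{j} \ge \max(0,v_p(m)-e_p)$ for every such $j$.

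For equality, take $j=p^t$ with $t \vcentcolon= \min(v_p(m),e_p)$; note $p^t\le k-1$. By Kummer's theorem, $v_p\binom{m}{p^t}$ counts the carries when adding $p^t$ and $m-p^t$ in base $p$. Because $p^{v_p(m)}\mid m$, these carries occur exactly in positions $t,\dots,v_p(m)-1$. So there are $v_p(m)-t$ carries, which equals $\max(0,v_p(m)-e_p)$. Putting the bounds together over all primes $p$ gives the stated formula; the case $m=1$, i.e.\ $n=k$, is consistent with it.

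The main obstacle is Step 1: one needs to spot that the odd-$k$ reflection cancels the top-degree binomial exactly, which is what produces a clean expansion in the basis $\binom{k-1-x}{i}$. Once that expansion is in hand, Steps 2 and 3 are routine.
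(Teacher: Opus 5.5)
Your proof is correct and follows essentially the same route as the paper: expand $F_{n,k}$ in a binomial basis, use that $g_{n,k}$ is the gcd of the coefficients, and evaluate the resulting gcd of binomial coefficients in $m=n-k+1$ prime by prime via Kummer's theorem. The differences are cosmetic. The paper uses the standard basis $\binom{x}{j}$, whose coefficients $\binom{n-j}{k-j}=\binom{m+(k-j)-1}{k-j}$ already account for the constant term. Your reflected basis $\binom{k-1-x}{i}$ instead produces $\binom{m}{j}$, at the cost of an extra Vandermonde step for the constant coefficient. Your lower bound via $j\binom{m}{j}=m\binom{m-1}{j-1}$ is a slightly cleaner substitute for the paper's carry count.
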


In the proof, we will require the following elementary number-theoretic fact.

\begin{lemma}
    \label{l:binomial-gcd}
    For any positive integers $m,\ell$, we have
    \[
        \gcd\Bigl\{\binom{m+j-1}{j} : 1 \leq j \leq \ell\Bigr\} = \frac{m}{\gcd(m,\lcm(1,\ldots,\ell))}.
    \]
\end{lemma}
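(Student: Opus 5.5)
We will prove the statement one prime at a time. Write $D_\ell \vcentcolon= \gcd\{\binom{m+j-1}{j} : 1\le j\le \ell\}$. For each prime $p$ we will show that $v_p(D_\ell) = \max\bigl(0, v_p(m) - v_p(L)\bigr)$, where $L=\lcm(1,\ldots,\ell)$. This suffices because the right-hand side of the lemma, $m/\gcd(m,L)$, has $p$-adic valuation $v_p(m)-\min(v_p(m),v_p(L))$, which is the same quantity. Put $a \vcentcolon= v_p(m)$ and $e \vcentcolon= v_p(L)$; by definition $e$ is the largest exponent with $p^e\le \ell$.

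\emph{Upper bound.} The term $j=1$ gives $\binom{m}{1}=m$, so $v_p(D_\ell)\le a$. This already settles the case $a\le e$ if we show that some term has valuation $0$. Take $j=p^a\le p^e\le \ell$. Using
\[
    \binom{m+j-1}{j}=\frac{m}{j}\binom{m+j-1}{j-1},
\]
we get $v_p \ge a-a=0$. For the exact value we will use Kummer's theorem: $v_p\binom{m+j-1}{j}$ equals the number of carries when adding $j$ and $m-1$ in base $p$. Since $p^a\,\|\,m$, the last $a$ base-$p$ digits of $m-1$ are all $p-1$, and digit $a$ of $m-1$ is at most $p-2$. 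Adding $j=p^a$ therefore places a $1$ in position $a$ with no carry. Hence this term is not divisible by $p$, and $v_p(D_\ell)=0$ when $a\le e$.

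When $a>e$, take $j=p^e$. The identity above gives $v_p\binom{m+p^e-1}{p^e} = a-e+v_p\binom{m+p^e-1}{p^e-1}$. The last binomial, $\binom{m-1+p^e-1}{p^e-1}$, corresponds by Kummer to adding $p^e-1$ and $m$; the low $e$ digits of $m$ are zero, so there are no carries and its valuation is $0$. Thus the upper bound $a-e$ holds.

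\emph{Lower bound.} For every $j\le \ell$ we write $\binom{m+j-1}{j}=\frac{m}{j}\binom{m+j-1}{j-1}$. This gives $v_p\ge a-v_p(j)\ge a-e$, since $v_p(j)\le e$ for $j\le\ell$. Combined with the upper bound, $v_p(D_\ell)=\max(0,a-e)$, which proves the lemma. The only delicate step is the carry computation used for the upper bounds. If Kummer's theorem feels heavy, it can be replaced by a direct check: the product $\prod_{i=1}^{j-1}(m+i)/i$ has $v_p(m+i)=v_p(i)$ for $i<p^a$, and we only use $i<p^e\le p^a$ or $i<p^a$.
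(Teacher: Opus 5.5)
Your proof is correct and follows the paper's argument. You work prime by prime and use Kummer's theorem to show that the witnesses $j=p^{v_p(m)}$ (when $v_p(m)\le e$) and $j=p^{e}$ (when $v_p(m)>e$) attain the claimed valuation; the only difference is that you prove the general lower bound $v_p\binom{m+j-1}{j}\ge v_p(m)-e$ explicitly via $\binom{m+j-1}{j}=\frac{m}{j}\binom{m+j-1}{j-1}$, whereas the paper just asserts it from carry counting.

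One slip: in the case $a>e$ the binomial you apply Kummer to should be $\binom{m+p^e-1}{p^e-1}$, not $\binom{m-1+p^e-1}{p^e-1}$, although your carry description (adding $p^e-1$ and $m$) refers to the correct one.
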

\begin{proof}
    It suffices to show that, for any prime $p$,
    \[
        \min_{j\in [\ell]}v_p\Bigl(\binom{m+j-1}{j}\Bigr) = v_p(m) - \min(v_p(m), \gamma),
    \]
    where $v_p$ denotes $p$-adic valuation and we write
    \[
        \gamma \vcentcolon= v_p\bigl(\lcm(1,\ldots,\ell)\bigr) = \max_{j\in [\ell]}v_p(j) = \lfloor\log_p\ell\rfloor.
    \]
    By Kummer's theorem (see e.g.\ \cite[Theorem 3.7]{Granville}), the left-hand side is the smallest number of carries when adding some $j \in [\ell]$ to $m-1$ in base $p$. If $v_p(m) \leq \gamma$, then the number of carries for $j = p^{v_p(m)}$ is zero, so the claim follows. If $v_p(m) > \gamma$, then it is easy to see the number of carries is always at least $v_p(m)-\gamma$, with equality for $j = p^{\gamma}$. Thus, the proof is complete.
\end{proof}

\begin{proof}[Proof of Lemma~\ref{l:gcd-app}]
    We recall the standard fact that a polynomial with rational coefficients maps $\Z$ to $\Z$ if and only if all of its coefficients with respect to the binomial basis
    \[
        \cB \vcentcolon= \Bigl\{\binom{x}{j} : j \geq 0\Bigr\}
    \]
    are integers. Furthermore, for such a polynomial $f$,
    \[
        \gcd\bigl\{f(x) : x \in \Z\bigr\}
    \]
    can be computed as the greatest common divisor of its coefficients with respect to $\cB$. Expanding $F_{n,k}$ in terms of $\cB$, we obtain
    \[
        F_{n,k}(x) = -\binom{n-1}{k-1} -\sum_{j=1}^{k-1}(-1)^j\binom{n-j}{k-j}\binom{x}{j}.
    \]
    Therefore, we have
    \[
        g_{n,k} = \gcd\Bigl\{\binom{n-j}{k-j} : 1 \leq j \leq k-1\Bigr\},
    \]
    so the conclusion follows from Lemma~\ref{l:binomial-gcd}.
\end{proof}

\begingroup
\sloppy
\printbibliography
\endgroup

\end{document}